\def\NAT@def@citea{\def\@citea{\NAT@separator}}
\theoremstyle{plain}
\newtheorem{theorem}{Theorem}[section]
\newtheorem{corollary}[theorem]{Corollary}
\newtheorem{proposition}[theorem]{Proposition}
\theoremstyle{definition}
\newtheorem{definition}[theorem]{Definition}
\newtheorem{example}[theorem]{Example}
\theoremstyle{remark}
\newtheorem{remark}{Remark}
\newcommand{\R}{\mathbb R}
\newcommand{\N}{\mathbb N}
\newcommand{\Q}{\mathbb Q}
\newcommand{\Usfer}{{\mathbb S}}
\newcommand{\dom}{{\rm dom}\, }
\newcommand{\dmn}{{\rm dim}\, }
\newcommand{\gph}{{\rm gph}\,}
\newcommand{\krn}{{\rm ker}\, }
\newcommand{\img}{{\rm im}\, }
\newcommand{\id}{{\rm id}\, }    
\newcommand{\nullv}{\mathbf{0}}
\newcommand{\conv}{{\rm conv}\, }
\newcommand{\stardif}{\hbox{$\,{*\over {}}\,$}}
\newcommand{\sgn}{{\rm sgn}\, }
\newcommand{\Uplim}{{\rm Limsup}}
\newcommand{\Matmxn}{{\mathcal M}_{m\times n}(\R)}    
\newcommand{\Lin}{{\mathcal L}}    
\newcommand{\CC}{{\mathcal K}}     
\newcommand{\Sub}{{\mathcal S}}    
\newcommand{\DCH}{\mathcal{DS}}  
\newcommand{\Ccond}{({\mathfrak{C}})}    
\newcommand{\Rsubd}{\widehat{\partial}}    
\newcommand{\Bsubd}{{\partial}_M}    
\newcommand{\injg}[1]{{\rm inj}(#1)}   
\newcommand{\Fder}[2]{{\rm D}#1(#2)}   
\newcommand{\Jmat}[2]{{\rm J}#1(#2)}   
\newcommand{\Clder}[2]{{\rm \partial}^\circ#1(#2)}   
\newcommand{\sqdcoder}[2]{\widetilde{\rm D}^*#1(#2)}   
\newcommand{\starcoder}[2]{\widetilde{\rm D}^{\stardif}#1(#2)}   
\newcommand{\ball}[2]{{\rm B}[#1; #2]}      
\newcommand{\oball}[2]{{\rm B}(#1; #2)}   
\newcommand{\dist}[2]{{\rm dist}\left(#1,#2\right)}
\newcommand{\supp}[2]{{\varsigma}\left(#1;#2\right)}   
\newcommand{\RNcone}[2]{\widehat{{\rm N}}(#1;#2)}
\newcommand{\Rcoder}[2]{\widehat{{\rm D}}^*#1(#2)}   
\newcommand{\lip}[2]{{\rm lip}(#1;#2)}
\newcommand{\cov}[2]{{\rm cov}(#1;#2)}   
\newcommand{\lop}[2]{{\rm lop}(#1;#2)}    
\newcommand{\reg}[2]{{\rm reg}(#1;#2)}    
\newcommand{\inj}[2]{{\rm inj}(#1;#2)}   
\newcommand{\Apr}[3]{{\rm A}#1(#2;#3)}   
\begin{document}


\title{On some generalizations of Hadamard's inversion theorem beyond differentiability}

\author{
\name{A. Uderzo\textsuperscript{a}\thanks{CONTACT A. Uderzo. Email: amos.uderzo@unimib.it}}
\affil{\textsuperscript{a} Dipartimento di Matematica e Applicazioni, Universit\`a
di Milano-Bicocca, Milano, Italy}
}

\maketitle

\begin{abstract}
A recognized trend of research investigates generalizations of the Hadamard's
inversion theorem to functions that may fail to be differentiable.
In this vein, the present paper explores some consequences of a recent
result about the existence of global Lipschitz continuous inverse by
translating its metric assumptions in terms of nonsmooth analysis constructions.
This exploration focuses on continuous, but possibly not locally Lipschitz
mappings, acting in finite-dimensional Euclidean spaces.
As a result, sufficient conditions for global invertibility are formulated
by means of strict estimators, $\stardif$-difference of convex compacta
and regular/basic coderivatives. These conditions qualify the global inverse as
a Lipschitz continuous mapping and provide quantitative estimates of its
Lipschitz constant in terms of the above constructions.
\end{abstract}

\begin{keywords}
Global inversion; linear openness; metric injectivity; strong regularity;
strict $\mu$-estimator; difference of convex compacta; coderivatives.
\end{keywords}

\section{Introduction}

The celebrated Hadamard's inversion theorem establishes a connection between the global
invertibility of a mapping and certain differential properties of it.
Global as well as local invertibility is a key property that mappings may have,
widely employed in various contexts often in synergy with more structured properties
(see, for instance, \cite{Elcr82}). Such a property can be considered in highly
abstract settings, because in its essence it is
free from references to specific structures, such as the topological, the
metric and the differential ones. This freedom however may result in a poor
interplay with the latter ones, provided that they are at disposal, as
illustrated by the example below.

\begin{example}
Let $f:\R\longrightarrow\R$ be defined by
$$
  f(x)=x\cdot\chi_{{}_{\Q}}(x)-x\cdot\chi_{{}_{\R\backslash\Q}}(x)=
  \left\{\begin{array}{ll}
  x, & \quad\ \hbox{ if } x\in\Q, \\
  \\
  -x, & \quad\ \hbox{ if } x\in\R\backslash\Q.
  \end{array}
  \right.
$$
If $\R$ is equipped with its usual Euclidean space structure, it is
clear that $f$ is nowhere differentiable, while is continuous only
at $0$, yet it is globally invertible, namely there exists
$f^{-1}:\R\longrightarrow\R$ (which is given, in the present case,
by $f$ itself).
\end{example}

That said, it must be recognized that the study of invertibility
conditions took benefits from tools of differential calculus and linear
algebra.
It is well known indeed that if $f:\R^n\longrightarrow\R^n$
is continuously differentiable in a neighbourhood of a point $\bar x\in\R^n$
and its (Fr\'echet) derivative $\Fder{f}{\bar x}:\R^n\longrightarrow\R^n$
at $\bar x$ is invertible (the Jacobian matrix $\Jmat{f}{\bar x}$ representing the linear
mapping $\Fder{f}{\bar x}$ is nonsingular), then $f$ admits a local
inverse, which is continuously differentiable in a neighbourhood of
$f(\bar x)$. This classic result was subsequently generalized to contexts of possible
lack of differentiability in several ways (see, among the others, \cite{Clar76},
\cite[Theorem 1E.3]{DonRoc14}, \cite[Theorem 3.3.1]{JeyLuc08}).

Since derivatives (or their surrogates) provide only a local approximation of a mapping,
the above result is expected to hold only locally, as it actually does.
The great advance made with the Hadamard's theorem consists in providing
a sufficient condition for the global invertibility of a mapping in terms of
derivatives. Its statement is recalled below.

\begin{theorem}[Hadamard's inversion theorem]     \label{thm:Hadamard}
Given a mapping $f:\R^n\longrightarrow\R^n$, suppose that:

\begin{itemize}

\item[(i)] $f\in {\rm C}^1(\R^n)$;

\item[(ii)] for every $x\in\R^n$ there exists $\Fder{f}{x}^{-1}$;

\item[(iii)] there exists $\kappa>0$ such that
$\sup_{x\in\R^n}\|\Fder{f}{x}^{-1}\|_\Lin\le\kappa$.

\end{itemize}

\noindent Then $f$ is a global diffeomorphism, namely:

\begin{itemize}

\item[(t)] $\exists f^{-1}:\R^n\longrightarrow\R^n$;

 \item[(tt)] $f^{-1}$ is differentiable on $\R^n$.

\end{itemize}
\end{theorem}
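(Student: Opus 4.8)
The plan is to prove this in two movements: first establishing that $f$ is a local diffeomorphism everywhere (which is immediate), then leveraging the uniform bound (iii) to upgrade local invertibility to global invertibility via a topological/continuation argument. By the classical inverse function theorem, conditions (i) and (ii) guarantee that at every point $x\in\R^n$ the mapping $f$ restricts to a $\mathrm{C}^1$-diffeomorphism between a neighbourhood of $x$ and a neighbourhood of $f(x)$. Thus $f$ is an open map and a local homeomorphism; the content of Hadamard's theorem is that the uniform control furnished by (iii) forces this local structure to globalize. Once global bijectivity is secured, differentiability of $f^{-1}$ (conclusion (tt)) follows again from the inverse function theorem applied locally, so the crux lies entirely in proving surjectivity and injectivity.

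\smallskip

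The standard route I would follow is a \emph{covering-space / path-lifting} argument. Since $f$ is a local homeomorphism between manifolds and $\R^n$ (the target) is simply connected, it suffices to show that $f$ is a \emph{proper} map or, equivalently, that $f$ has the path-lifting property with estimates controlled by $\kappa$. Concretely, I would fix a target point $y\in\R^n$ and an arbitrary straight-line path $\gamma(t)=y_0+t(y-y_0)$ from some $y_0=f(x_0)$ in the image to $y$, and attempt to lift it to a path $x(t)$ in the domain satisfying $f(x(t))=\gamma(t)$. Differentiating the lifting relation gives the ODE $\dot x(t)=\Fder{f}{x(t)}^{-1}(y-y_0)$, so by (iii) the lifted path obeys $\|\dot x(t)\|\le\kappa\|y-y_0\|$; hence the lift cannot escape to infinity in finite time and extends to all of $[0,1]$. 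Evaluating at $t=1$ produces a preimage of $y$, establishing surjectivity. Injectivity then follows because two distinct preimages would, by the local homeomorphism property and connectedness of $\R^n$, contradict the uniqueness of lifts sharing an endpoint.

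\smallskip

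The main obstacle I anticipate is making the continuation argument rigorous: one must show the maximal interval of existence of the lift is closed as well as open, i.e.\ that the lifted trajectory does not blow up before $t=1$. This is precisely where hypothesis (iii) is indispensable — without a uniform bound on $\|\Fder{f}{x}^{-1}\|_\Lin$, the solution of the ODE above could diverge to infinity in finite parameter time, and the argument collapses (indeed, invertibility of each derivative alone, as in condition (ii), does not suffice for global invertibility, as classical counterexamples show). The bound $\kappa$ gives an \emph{a priori} Lipschitz estimate on the lift, so by completeness of $\R^n$ the endpoint $\lim_{t\to t^*} x(t)$ exists at the supremum $t^*$ of the maximal interval; the local diffeomorphism property at that endpoint then lets me continue past $t^*$, forcing $t^*=1$.

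\smallskip

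An alternative I would keep in reserve is to invoke the global inversion framework advertised in the abstract: recognize that (iii) is exactly a uniform \emph{metric regularity / linear openness} condition with modulus $\kappa$, and that such uniform openness together with continuity yields a globally defined Lipschitz inverse directly. This is in fact the viewpoint the paper intends to generalize, so I would phrase the classical proof so that the role of $\kappa$ as both an openness modulus and the Lipschitz constant of $f^{-1}$ is transparent, thereby foreshadowing the nonsmooth extensions. Either way, the differentiability assertion (tt) is the easy final step, obtained by noting that $\Fder{(f^{-1})}{y}=\Fder{f}{f^{-1}(y)}^{-1}$ wherever the right-hand side makes sense, which by (ii) is everywhere.
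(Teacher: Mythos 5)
Your proposal is sound, but note first that the paper contains no proof of Theorem \ref{thm:Hadamard} at all: the theorem is recalled as classical background (it goes back to Hadamard, cited in the bibliography), and the paper's business is to recover and generalize it by other means. Your main route --- the local inverse function theorem at every point, then lifting the segment $\gamma(t)=y_0+t(y-y_0)$ via the ODE $\dot x(t)=\Fder{f}{x(t)}^{-1}(y-y_0)$, with hypothesis (iii) giving the a priori bound $\|\dot x(t)\|\le\kappa\|y-y_0\|$ that confines the lift to a compact set and forces the maximal interval of continuation to be all of $[0,1]$ --- is the standard continuation/covering-space proof of the Hadamard--L\'evy theorem, and it works. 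Two details deserve care if you write it out in full: since $f$ is only ${\rm C}^1$, the right-hand side of your ODE is merely continuous, so Peano gives existence but not ODE uniqueness; uniqueness of lifts must come instead from the topological unique-lifting lemma for local homeomorphisms. Likewise, your one-line injectivity step is really the monodromy argument in disguise: path lifting together with the local homeomorphism property makes $f$ a covering map, and a covering of the simply connected $\R^n$ by a connected space is one-sheeted --- ``uniqueness of lifts sharing an endpoint'' alone does not close this without homotopy lifting. By contrast, the route the paper's own machinery would take (and which you correctly keep in reserve) is metric rather than topological: with $h_x=f(x)+\Fder{f}{x}[\,\cdot-x\,]$ as a strict $0$-estimator, each $h_x$ is an invertible affine map, hence strongly regular, and hypothesis (iii) says exactly that $\inf_{x\in\R^n}\lop{h_x}{x}\ge\kappa^{-1}>0$, so Theorem \ref{thm:gloinvest} (built on Theorem \ref{thm:AruZhu}: continuity, local injectivity, $\alpha$-covering) delivers $f^{-1}$ globally Lipschitz with constant $\kappa$. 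That approach buys a quantitative Lipschitz estimate on $f^{-1}$ and extends verbatim to nonsmooth $f$, which is the point of the paper; your covering argument buys a self-contained classical proof and reaches the smooth conclusion (tt) directly via $\Fder{(f^{-1})}{y}=\Fder{f}{f^{-1}(y)}^{-1}$, which is the same final step in both treatments.
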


A further step in providing more general sufficient conditions for
the global invertibility of a mapping has been done in the early 80s
with the appearance of \cite{Pour82}, when the unnecessary smoothness assumption
on $f$ was dropped out. Its role was replaced by Lipschitz continuity and
a proper condition on the Clarke subdifferential of the mapping.
Recall that a mapping $f:\R^n\longrightarrow\R^m$ is said to be
Lipschitz continuous with constant $\ell>0$ in a subset $U\subseteq\R^n$
if
\begin{equation}   \label{in:Lipcondef}
  \|f(x_1)-f(x_2)\|\le \ell\|x_1-x_2\|,\quad\forall x_1,\, x_2\in U.
\end{equation}
Whenever inequality (\ref{in:Lipcondef}) holds in a neighbourhood
of a given point $\bar x$, $f$ is said to be locally Lipschitz
around $\bar x$. In such an event,
the infimum over all $\ell$, for which there exists a neighbourhood $U$
of $\bar x\in\R^n$ such that (\ref{in:Lipcondef}) holds, will be denoted
throughout the paper by $\lip{f}{\bar x}$.
According to the Rademacher's theorem, a mapping $f$ which is locally Lipschitz
around $\bar x$ turns out to be Fr\'echet differentiable in a (Lebesgue)
full measure subset of a neighbourhood of $\bar x$. Thus, it is possible
to define
$$
  \Clder{f}{\bar x}=\conv\{M\in\Matmxn\ |\ M=\lim_{k\to\infty}\Jmat{f}{x_k},\
  \hbox{ for some }\ \{x_k\}_{k\in\N},\ x_k\not\in\Omega_f\},
$$
where $\Omega_f$ denotes the null measure subset of a proper neighbourhood
of $\bar x$ containing all those points at which $f$ is not Fr\'echet
differentiable and $\conv S$ denotes the convex closure of the set $S$.

\begin{theorem}[Pourciau's inversion theorem]    \label{thm:Pourciau}
Given a mapping $f:\R^n\longrightarrow\R^n$, suppose that:

\begin{itemize}

\item[(i)] $f$ is locally Lipschitz around each point $x\in\R^n$;

\item[(ii)] for every $x\in\R^n$ and for every $M\in \Clder{f}{x}$,
there exists $M^{-1}$;

\item[(iii)] there exists $\kappa>0$ such that
$\sup_{x\in\R^n}\sup_{M\in \Clder{f}{x}}\|M^{-1}\|\le\kappa$.

\end{itemize}

\noindent Then $f$ is a global homeomorphism, namely:

\begin{itemize}

\item[(t)] $\exists f^{-1}:\R^n\longrightarrow\R^n$;

 \item[(tt)] $f^{-1}$ is Lipschitz continuous on $\R^n$ with constant $\kappa$.

\end{itemize}
\end{theorem}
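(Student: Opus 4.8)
The plan is to reduce the global statement to a local inversion result supplemented by a topological globalization, and then to propagate the quantitative local bound to a global Lipschitz constant using the convexity of $\R^n$. First I would fix an arbitrary $x\in\R^n$ and invoke Clarke's nonsmooth inverse mapping theorem from \cite{Clar76}: hypotheses (i) and (ii) guarantee that $f$ is locally Lipschitz around $x$ and that every $M\in\Clder{f}{x}$ is nonsingular, whence there exist open neighbourhoods $U_x\ni x$ and $V_x\ni f(x)$ such that $f|_{U_x}\colon U_x\to V_x$ is a homeomorphism with a locally Lipschitz inverse. Thus $f$ is a local homeomorphism at \emph{every} point. Crucially, I would retain the quantitative content of that theorem: the Lipschitz modulus of the local inverse at $f(x)$ is controlled by $\sup_{M\in\Clder{f}{x}}\|M^{-1}\|$, so by (iii) each local inverse is Lipschitz with constant at most $\kappa$. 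Dually this yields the local expansion estimate
$$
  \|f(x_1)-f(x_2)\|\ge\kappa^{-1}\|x_1-x_2\|
$$
valid for $x_1,x_2$ in a neighbourhood of each point.

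The main obstacle is the passage from a local homeomorphism to a \emph{global} one, and I would handle it by a path-continuation (monodromy) argument. Fixing $x_0\in\R^n$ and a target $y\in\R^n$, consider the segment $y(t)=(1-t)f(x_0)+ty$ and the set of $t\in[0,1]$ for which $y|_{[0,t]}$ lifts, through the local inverses, to a continuous path $x(\cdot)$ starting at $x_0$. This set is nonempty and open because $f$ is a local homeomorphism. It is also closed: along any lift the local $\kappa$-Lipschitz property forces $\|x(t)-x_0\|\le\kappa\|y(t)-f(x_0)\|\le\kappa\|y-f(x_0)\|$, so the lift stays in a fixed compact ball and cannot break down in the limit. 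Hence the full segment lifts, $y\in f(\R^n)$, and $f$ is surjective; equivalently, the confinement of lifts expresses the \emph{properness} of $f$. Injectivity then follows because $\R^n$ is simply connected: unique lifting of paths shows that the number of preimages is locally constant and globally equal to one, so $f$ is a global homeomorphism. This is precisely the Hadamard--Caccioppoli principle that a proper local homeomorphism onto a simply connected space is a homeomorphism, here packaged to exploit the uniform bound (iii); the delicate point is that the topological input (simple connectedness) is indispensable, since the local estimate alone never secures global injectivity. This establishes conclusion (t).

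Finally, for the quantitative claim (tt) I would argue that the now globally defined inverse $f^{-1}$ is, by the local step, locally Lipschitz with constant at most $\kappa$ at every point. Since $\R^n$ is convex, a mapping that is locally Lipschitz with constant $\le\kappa$ everywhere is globally Lipschitz with the same constant: for arbitrary $y_1,y_2$, cover the segment $[y_1,y_2]$ by finitely many neighbourhoods on which the local bound holds and sum the increments along the segment. This yields $\|f^{-1}(y_1)-f^{-1}(y_2)\|\le\kappa\|y_1-y_2\|$, proving (tt) and completing the argument.
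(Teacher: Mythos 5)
The paper offers no proof of this theorem: it is recalled from \cite{Pour82} as background, so the only comparison available is with Pourciau's classical argument. Your proposal follows essentially that classical route---Clarke's nonsmooth inverse function theorem \cite{Clar76} at each point, a continuation (monodromy) argument along segments for surjectivity, the Hadamard--Caccioppoli principle for injectivity, and segment-chaining for the global constant---and its architecture is sound. It is worth noting that within this paper's own framework the theorem would instead drop out of Theorem \ref{thm:AruZhu}: hypotheses (ii)--(iii) give, via upper semicontinuity of $x\mapsto\Clder{f}{x}$ and the mean value inclusion, that $f$ is metrically injective around each $x$ and $\alpha$-covering at each $x$ for every $\alpha<\kappa^{-1}$ (in effect strong regularity with $\lop{f}{x}\ge\kappa^{-1}$; cf. Proposition \ref{pro:strrelchar}, or Proposition \ref{pro:gloinvest} with $h_x=f$ and $\mu=0$), which immediately yields (t) and (tt). Your topological route is heavier but self-contained and closer to the original.

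Two points in your write-up need repair. First, the quantitative local step is overstated: from $\|M^{-1}\|\le\kappa$ for all $M\in\Clder{f}{x}$ one obtains, for each $\varepsilon>0$, only an $\varepsilon$-dependent neighbourhood on which $\|f(x_1)-f(x_2)\|\ge(\kappa^{-1}-\varepsilon)\|x_1-x_2\|$; note that the mean value inclusion involves $\conv\bigcup_{z\in[x_1,x_2]}\Clder{f}{z}$, and a convex combination of matrices with $\|M^{-1}\|\le\kappa$ taken at \emph{different} points need not be invertible, so the upper semicontinuity of the generalized Jacobian together with the convexity of each single set $\Clder{f}{x}$ is genuinely needed to control this hull. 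Consequently each local inverse has Lipschitz \emph{modulus} at most $\kappa$, i.e. constant $\kappa+\varepsilon$ on shrinking neighbourhoods; your final chaining argument must be run with $\kappa+\varepsilon$ and followed by $\varepsilon\downarrow 0$, which it supports without structural change. Second, the parenthetical claim that the confinement of lifts ``expresses the properness of $f$'' is not correct as stated: confinement of lifts of segments issued from a basepoint only places $f^{-1}(K)$ within bounded distance of a single fibre, and without boundedness of fibres---which is essentially the injectivity being sought---this does not give properness. Fortunately properness is not needed: what your monodromy argument actually establishes is the path-lifting (continuation) property, and a local homeomorphism with this property is a covering map; triviality of coverings of the simply connected space $\R^n$ by a connected total space then gives injectivity, exactly as in your closing sentence of that step. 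With these two adjustments the proof is complete and agrees in substance with Pourciau's original one.
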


Since then, the study of conditions for global invertibility remained
in the agenda of several groups of researchers.
A recent trend of investigation is exploring the potential of concepts
emerged in modern variational analysis properly combined with nonsmooth
analysis constructions in deriving new generalizations of Hadamard's
theorem (see, among the others, \cite{AruZhu19,Gutu22,GutJar19,JaLaMa19}).

The purpose of the present paper is to carry on such a trend,
sheding light on some possible combinations, which seem to have
not yet gleaned. More precisely, following existent approaches,
this is done by analyzing local invertibility through metric regularity,
metric injectivity and strong regularity and their stability properties
under perturbation. Such an approach leads to replace generalized
derivatives with looser approximation concepts (strict $\mu$-estimators
or coderivatives),
which can be exploited in formulating general conditions for global
invertibility.

The contents of the paper are arranged as follows.
In Section \ref{Sect:2} a general scheme for deriving conditions
for global inversion of possibly nonsmooth mappings is presented.
It stems from a recent global inversion result relying on
variational analysis concepts of purely metric nature.
It leads to formulate two generalizations of Hadamard's theorem,
where derivatives are replaced by strict estimators.
In Section \ref{Sect:3} a result obtained in the preceding section
is specialized to the mappings admitting strict estimators, which are
dually representable by means of pairs of convex compacta.
Section \ref{Sect:4} comes back to a more general setting,
providing a global invertibility condition, where the role of
derivatives is played by regular/basic coderivatives.
Conclusions are gathered in Section \ref{Sect:5}.

The notations in use throughout the paper are standard.
In an Euclidean space $(\R^n,\|\cdot\|)$, $\ball{x}{r}$ denotes
the closed ball, with center at $x$ and radius $r$, while $\oball{x}{r}$
stands for the open one. The unit sphere centered at the null vector
$\nullv\in\R^n$ is indicated by $\Usfer$. The metric enlargement
with radius $r$ of a set $S\subseteq\R^n$ is denoted by $\oball{S}{r}=
\{x\in\R^n\ |\ \dist{x}{S}<r\}$, where $\dist{x}{S}=\inf_{z\in S}\|z-x\|$
is the distance of $x$ from $S$.
$\Sub(\R^n)$ denotes the cone of all sublinear (i.e. p.h. and convex)
functions defined on $\R^n$, whereas $\DCH(\R^n)=\Sub(\R^n)-\Sub(\R^n)$.
$\CC(\R^n)$ is the class of all nonempty convex compact subsets of $\R^n$.
By $\supp{\cdot}{S}$ the support function associated to the subset
$S\subseteq\R^n$ is denoted. If $\varphi:\R^n\longrightarrow\R\cup\{\pm\infty\}$
is a convex function, $\partial\varphi(x)$ stands for its subdifferential
at $x\in\dom\varphi=\varphi^{-1}(\R)$ in the sense of convex analysis,
whereas if $\varphi$ is concave
$\partial^+\varphi(x)$ is the superdifferential of $\varphi$ at $x$.
If $l:\R^n\longrightarrow\R^m$ is a linear mapping, $\krn l$, $\img l$
and $\|l\|_\Lin$ indicate the kernel, the range and the operator norm
of $l$, respectively. By $\|l\|_\Lin$ the operator norm of $l$ is
denoted.
The acronyms p.h. and u.s.c. stand for positively homogeneous and
upper semicontinuous, respectively.
Further notations will be introduced subsequently, contextually to
their use.

\vskip1cm


\section{A general scheme for global inversion without derivatives} \label{Sect:2}

The analysis here exposed start with a recent global invertibility
result established in \cite[Theorem 2.2]{AruZhu19}, which relies on
a suitable metric behaviour of mappings, disregarding
their differential properties. Such a metric behaviour deals with
a covering property formalized as follows.

\begin{definition}    \label{def:acover}
Given $\alpha>0$ and $x\in\R^n$, a mapping $f:\R^n\longrightarrow\R^m$
is said to be {\it $\alpha$-covering at $x$} if for every $\epsilon>0$
there exists $r\in (0,\epsilon]$ such that
\begin{equation}     \label{in:defacov}
  \ball{f(x)}{\alpha r}\subseteq f(\ball{x}{r}).
\end{equation}
The value
$$
   \cov{f}{\bar x}=\sup\{\alpha>0\ |\ \hbox{ for which
   (\ref{in:defacov}) holds}\,\}
$$
is called {\it covering bound} of $f$ at $x$.
If inclusion (\ref{in:defacov}) holds true for all $x\in\R^n$ and
$r\ge 0$, with the same value of $\alpha$, then $f$ will be called
{\it $\alpha$-covering}.
\end{definition}

In what follows, a mapping $f:\R^n\longrightarrow\R^n$ is said to
be locally injective provided that every $x\in\R^n$ admits a
neighbourhood in which $f$ is injective.

\begin{theorem}[\cite{AruZhu19}]    \label{thm:AruZhu}
Given a mapping $f:\R^n\longrightarrow\R^n$, suppose that:

\begin{itemize}

\item[(i)] $f$ is continuous on $\R^n$;

\item[(ii)] $f$ is locally injective;

\item[(iii)] $f$ is $\alpha$-covering at each $x\in\R^n$.

\end{itemize}

\noindent Then

\begin{itemize}

\item[(t)] $\exists f^{-1}:\R^n\longrightarrow\R^n$;

\item[(tt)] $f^{-1}$ is Lipschitz continuous on $\R^n$
with constant $\alpha^{-1}$.

\end{itemize}
\end{theorem}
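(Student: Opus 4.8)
The plan is to show that hypotheses (i)--(iii) force $f$ to be a local homeomorphism that lifts every path, hence a covering projection onto the simply connected space $\R^n$, and therefore a global homeomorphism; the Lipschitz bound for $f^{-1}$ will fall out of the uniform covering rate recorded along the lifts. First I would extract from (iii) two soft facts. Openness: given $x$ and an open $U\ni x$, choose $\epsilon>0$ with $\ball{x}{\epsilon}\subseteq U$; by $\alpha$-covering at $x$ there is $r\in(0,\epsilon]$ with $\ball{f(x)}{\alpha r}\subseteq f(\ball{x}{r})\subseteq f(U)$, so $f(U)$ is a neighbourhood of $f(x)$ and $f$ is an open mapping. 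Since $f$ is also continuous and locally injective, each $x$ has a neighbourhood $V$ on which $f|_V$ is a continuous open injection, i.e.\ a homeomorphism onto the open set $f(V)$; thus $f$ is a local homeomorphism. The solvability content of (iii) that I shall reuse is: whenever $\ball{f(x)}{\alpha r}\subseteq f(\ball{x}{r})$ holds, every $y$ with $\|y-f(x)\|\le\alpha r$ has a preimage within distance $\alpha^{-1}\|y-f(x)\|$ of $x$.

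The quantitative heart is complete lifting of segments with the sharp rate. Fix $x_0$, put $y_0=f(x_0)$, pick any $y\in\R^n$ and consider $\gamma(t)=(1-t)y_0+ty$, $t\in[0,1]$. Starting from $x_0$, the local homeomorphism property produces a lift $\widetilde\gamma$ (with $f\circ\widetilde\gamma=\gamma$) on a maximal interval $[0,T)$. At each $x=\widetilde\gamma(t)$, the uniform $\alpha$-covering furnishes arbitrarily small admissible radii, and pairing them with nearby segment points gives local solvability steps whose size is controlled by $\alpha^{-1}$ times the segment speed; because $f$ is a homeomorphism near $x$, the point produced by such a step coincides with the continued lift. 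This bounds the right-hand increments of $\widetilde\gamma$, and integrating along the (rectifiable) segment yields
\begin{equation}\label{in:covlift}
  \|\widetilde\gamma(t)-x_0\|\le\alpha^{-1}\,t\,\|y-y_0\|,\qquad t\in[0,1].
\end{equation}
Consequently $\widetilde\gamma$ stays bounded as $t\uparrow T$, so $\lim_{t\uparrow T}\widetilde\gamma(t)$ exists, and applying the local homeomorphism at that limit extends the lift; hence $T=1$. Thus every $y$ is reached as $\widetilde\gamma(1)$, which already proves surjectivity and, via \eqref{in:covlift} at $t=1$, the estimate $\|\widetilde\gamma(1)-x_0\|\le\alpha^{-1}\|y-y_0\|$.

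It remains to convert local into global injectivity. Since $\R^n$ is simply connected, any two paths from $y_0$ to a given $y$ are homotopic rel endpoints, and the monodromy theorem for the local homeomorphism $f$ guarantees that their lifts from $x_0$ share the same endpoint. Therefore $\xi(y):=\widetilde\gamma(1)$ is a well-defined continuous right inverse with $f\circ\xi=\id$ and $\xi(y_0)=x_0$; the composition $\xi\circ f$ is then a continuous self-map of the connected space $\R^n$ that is locally the identity and fixes $x_0$, so $\xi\circ f=\id$ as well. Hence $f$ is a bijection and $\xi=f^{-1}$, giving (t). For (tt), apply the segment construction to the segment from $y_1$ to $y_2$ started at $f^{-1}(y_1)$: its terminal point is $f^{-1}(y_2)$, and \eqref{in:covlift} gives $\|f^{-1}(y_1)-f^{-1}(y_2)\|\le\alpha^{-1}\|y_1-y_2\|$, so $f^{-1}$ is Lipschitz on $\R^n$ with constant $\alpha^{-1}$.

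I expect the genuine obstacle to be precisely the local-to-global passage, which splits into two delicate points. The first is certifying that the lift cannot escape to infinity in finite ``time'': nothing in (i)--(iii) is topological, so the non-escape estimate \eqref{in:covlift} must be wrung out of the purely metric, \emph{pointwise} covering rate at each point of the lift (essentially a Lyusternik--Graves/Milyutin iteration converting the uniform infinitesimal rate into a semi-global solvability bound). The second is the monodromy step, where simple connectedness of $\R^n$ is indispensable in upgrading the merely local injectivity of hypothesis (ii) to a single-valued inverse; without it one would only obtain a covering map of possibly several sheets.
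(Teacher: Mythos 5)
Two preliminary facts frame the comparison. The paper itself contains no proof of this theorem: it is imported verbatim from \cite{AruZhu19}, and the only trace of the original argument is Remark \ref{rem:lopenacov}, which reveals that the source's route is purely metric--variational: a Caristi-type lemma (\cite[Lemma 4.2]{AruZhu19}) upgrades pointwise $\alpha$-covering of a continuous map to \emph{global} $\alpha$-covering, i.e.\ $\ball{f(x)}{\alpha r}\subseteq f(\ball{x}{r})$ for all $x$ and all $r\ge 0$, after which surjectivity and the estimate $\dist{x}{f^{-1}(y)}\le\alpha^{-1}\|y-f(x)\|$ are immediate and local injectivity is globalized. Your proof is instead the classical Hadamard--L\'evy topological route (local homeomorphism, quantitative lifting of segments, covering-space conclusion), so it is a genuinely different path, trading the variational lemma for simple connectedness of $\R^n$. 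Much of it is sound: openness and the local homeomorphism property are correct; and your lifting estimate, though delicate, works --- pointwise $\alpha$-covering supplies at each $t$ only a \emph{sequence} of admissible steps, i.e.\ a $\liminf$-type increment bound, but the standard growth lemma (the closed-set/supremum argument) needs exactly that, and identifying the preimage produced by covering with the continued lift is legitimate once the admissible radius is below the injectivity radius at $\widetilde\gamma(t)$.

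There are, however, two genuine gaps. First, a repairable one: ``$\widetilde\gamma$ stays bounded as $t\uparrow T$, so the limit exists'' is a non sequitur --- a bounded continuous curve need not converge. What you need is the two-point version $\|\widetilde\gamma(t)-\widetilde\gamma(s)\|\le\alpha^{-1}(t-s)\|y-y_0\|$ for $s<t<T$ (the same growth lemma run from base point $s$), which makes $\widetilde\gamma$ Cauchy at $T$. Second, and more serious: the monodromy theorem for a local homeomorphism is valid only under the curve-lifting property for \emph{all} paths, whereas you established lifting only for straight segments, and your escape estimate does not extend to non-rectifiable paths; as invoked, the step is unjustified. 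It also aims at the wrong target: well-definedness of $\xi$ never needed monodromy, since there is exactly one segment from $y_0$ to each $y$ and two lifts of a fixed path agreeing at a point agree everywhere (a clopen argument using local injectivity). What your conclusion actually requires --- and what the assertion ``$\xi\circ f$ is locally the identity'' silently uses --- is \emph{continuity} of $\xi$, which you never prove. It can be supplied without any covering-space machinery: for $y_k\to y$, the lifts of the segments from $y_0$ to $y_k$ all start at $x_0$ and are equi-Lipschitz with constant $\alpha^{-1}\sup_k\|y_k-y_0\|$ by your estimate, so Arzel\`a--Ascoli plus uniqueness of lifts forces uniform convergence to the lift of the segment from $y_0$ to $y$, whence $\xi(y_k)\to\xi(y)$. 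With continuity of $\xi$ in hand, your clopen argument for $\xi\circ f=\id$ on the connected space $\R^n$ is correct, global injectivity follows, and the terminal estimate does deliver the sharp Lipschitz constant $\alpha^{-1}$ of assertion (tt).
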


It is worth noticing that Theorem \ref{thm:AruZhu} avoids
any reference to (traditional or generalized) differential
properties of mappings, while focusing on basic, yet
more than set-theoretic (topological and metric) properties,
namely continuity, injectivity and metric covering.
A related feature is that its thesis is not only qualitative
($f$ is a homeomorphism), but it provides quantitative estimates
for the Lipschitz constant of $f^{-1}$.
In the above theorem, a key role is played by hypothesis (iii),
which is indeed a metric surjection assumption. The property of
being $\alpha$-covering at a given point is a weaker (point-based)
variant of the linear openness (a.k.a. openness at a linear rate),
which is well-known and largely employed in variational analysis.
Recall that a mapping
$f:\R^n\longrightarrow\R^m$ is said to be linearly open
around a point $\bar x\in\R^n$ if there exists a constant $\alpha>0$
together with neighbourhoods $U$ of $\bar x$ and $V$ of $f(\bar x)$
such that
\begin{equation}     \label{in:lopdef}
  \oball{f(x)}{\alpha r}\cap V\subseteq f\left(\oball{x}{r}\right),
  \quad\forall x\in\ U,\, \forall r>0.
\end{equation}
The quantity
$$
   \lop{f}{\bar x}=\sup\{\alpha>0\ |\ \exists U,\ V \hbox{ for which
   (\ref{in:lopdef}) holds}\,\}
$$
is called (exact) linear openness bound of $f$ around $\bar x$
(see \cite{DonRoc14,Mord06,Mord18}).

\begin{remark}     \label{rem:lopenacov}
If a continuous mapping $f:\R^n\longrightarrow\R^m$ is linearly
open around $\bar x$, then for any $\alpha<\lop{f}{\bar x}$ it is
also $\alpha$-covering at $\bar x$ in the sense of Definition \ref{def:acover},
and it is true that $\lop{f}{\bar x}\le\cov{f}{\bar x}$.
On the other hand, if the property of being $\alpha$-covering
holds true at each point of $\R^n$ with the same $\alpha>0$, then it implies linear
openness around each point $x$ of $\R^n$, with $\lop{f}{x}\ge\alpha$.
Indeed, as done at the beginning of the proof of \cite[Theorem 2.2]{AruZhu19}
by means of a Caristi-like condition (see \cite[Lemma 4.2]{AruZhu19}),
it is possible to prove that, for a continuous mapping, $\alpha$-covering
at each point of $\R^n$ actually implies $\alpha$-covering.
\end{remark}

The concept of linear openness turned out to be highly fruitful
for the development of several topics in variational and nonlinear analysis,
in particular under its equivalent reformulation in terms of
metric regularity. It is indeed known that $f$ is linearly open
around $\bar x$ iff there exists $\kappa>0$ together with a neighbourhood
$U$ of $\bar x$ and $V$ of $f(\bar x)$ such that
$$
  \dist{x}{f^{-1}(y)}\le\kappa\|y-f(x)\|,\quad
  \forall (x,y)\in U\times V.
$$
The infimum over all $\kappa$ for which there exist $U$ and $V$
satisfying the above inequality is called (exact) metric regularity bound
of $f$ around $\bar x$ and will be denoted throughout the paper by
$\reg{f}{\bar x}$. In other terms, both linear openness and metric regularity
are seemingly different manifestations of the same metric
behaviour of a mapping. The linear openness bound and the metric
regularity bound are known to be intertwined by the sharp relation
\begin{equation}     \label{eq:lopregrel}
  \lop{f}{\bar x}\cdot \reg{f}{\bar x}=1,
\end{equation}
with the conventions that $0\cdot\infty=1$, while
$\lop{f}{\bar x}=0$ and $\reg{f}{\bar x}=\infty$
indicate the lack of the related property
(see \cite[Theorem 3E.6]{DonRoc14}, \cite[Theorem 3.2]{Mord18}).

In its essence, differentiability of a mapping amounts to be suitable
for being locally approximated by linear mappings. Nonsmooth analysis
went beyond this property, by admitting relaxed forms of
approximation (via directional restrictions or metric estimates)
and by removing the linearity of the approximating terms.
Among several possible ways to accomplish this (see \cite{DemRub95,JeyLuc08,Mord06,PalRol97})
in \cite[Chapter 1.5]{DonRoc14} this approach resulted in the following notion.

\begin{definition}[Strict $\mu$-estimator]    \label{def:mestim}
Consider a mapping $f:\R^n\longrightarrow\R^m$, a point $\bar x\in\R^n$
and $\mu\ge 0$. A function $h_{\bar x}:\R^n\longrightarrow\R^m$ is said
to be a {\it strict $\mu$-estimator} of $f$ at $\bar x$ if
\begin{itemize}

\item[(i)] $h_{\bar x}(\bar x)=f(\bar x)$;

\item[(ii)] $\lip{f-h_{\bar x}}{\bar x}\le\mu$.

\end{itemize}
\end{definition}

More than generalized derivatives, strict $\mu$-estimators provide
a scheme for building various kind of controlled metric perturbations
of a given mapping. In the example below, some specific implementations
of this concept are presented.

\begin{example}[Strict first-order approximation]    \label{ex:strapprox}
Following \cite[Chapter 1.5]{DonRoc14}, given a mapping $f:\R^n\longrightarrow\R^m$
and a point $\bar x\in\R^n$, a mapping $\Apr{f}{\bar x}{\cdot}:\R^n\longrightarrow\R^m$
is said to be a strict first-order approximation of $f$ at $\bar x$ if
$\Apr{f}{\bar x}{\bar x}=f(\bar x)$ and
\begin{equation}     \label{eq:defstrapprox}
  \lip{f-\Apr{f}{\bar x}{\cdot}}{\bar x}=0.
\end{equation}
These conditions obviously imply that any strict first-order
approximation of $f$ at $\bar x$ is, in particular, a strict $\mu$-estimator
of $f$ at $\bar x$, for every $\mu\ge 0$. It is worth observing that
the axiomatic notion of strict  first-order approximation can cover several
classic and generalized (less classic) situations of differentiability.
For instance, if $\Apr{f}{\bar x}{\cdot}$
can be taken in the class of all affine mappings, then, whenever $f$ is
strictly differentiable at $\bar x$, by setting
$$
  \Apr{f}{\bar x}{x}=f(\bar x)+\Fder{f}{\bar x}[x-\bar x]
$$
one sees that its first-order expansion at $\bar x$, expressed by
its Fr\'echet derivative, falls in the general scheme of strict
first-order approximation.

Another specific instance of strict first-order approximation
is given by the notion of strong Bouligand derivative. According
to \cite[Definition 3.1.2]{FacPan03}, $f:\R^n\longrightarrow\R^m$
is strongly Bouligand differentiable at $\bar x$ if it is locally
Lipschitz around $\bar x$ and condition (\ref{eq:defstrapprox})
holds with
$$
  \Apr{f}{\bar x}{x}=f(\bar x)+f'(\bar x;x-\bar x),
$$
where $f'(\bar x;v)$ denotes the directional derivative of $f$
at $\bar x$, in the direction $v\in\R^n$.

Let us further mention that
strict first-order approximations appeared in a more general setting
(normed linear spaces) already in \cite{Robi91} under the name of
strong approximations.
\end{example}

\begin{remark}    \label{rem:strmuestpro}
(i) From Definition \ref{def:mestim}(ii) it follows that if $h_{\bar x}$
(resp. $f$) is locally Lipschitz around $\bar x$, then $f$ (resp.
$h_{\bar x}$) inherits the same property around $\bar x$.
Nevertheless, it may happen that mappings failing to be locally
Lipschitz around a reference point do admit estimators at that
point, of course affected by the same lack of Lipschitz continuity
(see Example \ref{ex:gloinvnotlip}).

(ii) For the subject under investigation, the question of existence
of $\mu$-estimators for a given mapping at a given point becomes
a relevant issue. Example \ref{ex:strapprox} and the point (i) of
the present remark should offer perspectives for addressing this
question. Obviously, whenever $f$ is locally Lipschitz around
$\bar x$, it admits $h_{\bar x}\equiv 0$ as a $\mu$-estimator at $\bar x$,
for every $\mu<\lip{f}{\bar x}$. On the other hand, in any case
$f$ admits $h_{\bar x}=f$ as a $\mu$-estimator at $\bar x$,
for every $\mu\ge 0$ and $\bar x\in\R^n$. The contexts of employment
should determine the convenience of choices to be made, through specific
requirements on the class $\{h_x:\ x\in\R^n\}$.
\end{remark}

The next proposition justifies the employment of $\mu$-estimators
in the current approach by asserting the stability behaviour of linear
openness/metric regularity in the presence of additive Lipschitz
perturbations, which was independently observed by A.A. Milyutin and
S.M. Robinson. Actually, it captures the quantitative stability
phenomenon behind many recent extensions of the celebrated
Lyusternik-Graves theorem. As presented below, it is a reformulation of
\cite[Theorem 3F.1]{DonRoc14}, adapted in the light of the relation
(\ref{eq:lopregrel}).

\begin{proposition}[Linear openness via strict estimators]     \label{pro:lopenest}
Let $f:\R^n\longrightarrow\R^m$ be a given mapping, let $\bar x\in\R^n$,
and $\mu\ge 0$. Suppose that:

\begin{itemize}

\item[(i)] $f$ admits a strict $\mu$-estimator $h_{\bar x}:
\R^n\longrightarrow\R^m$ at $\bar x$;

\item[(ii)] $\lop{h_{\bar x}}{\bar x}>\mu$.

\end{itemize}

Then, $f$ is linearly open around $\bar x$, with $\lop{f}{\bar x}\ge
\lop{h_{\bar x}}{\bar x}-\mu$. In particular, $f$ is $\alpha$-covering
at $\bar x$ with any $\alpha<\lop{h_{\bar x}}{\bar x}-\mu$.
\end{proposition}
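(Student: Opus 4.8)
The plan is to move from the linear-openness formulation of the hypotheses to the equivalent metric-regularity formulation via the sharp relation (\ref{eq:lopregrel}), to invoke there the additive stability of metric regularity under Lipschitz perturbations (the Milyutin--Robinson theorem, here in the guise of \cite[Theorem 3F.1]{DonRoc14}), and finally to translate the resulting estimate back into linear openness. To set the stage, decompose $f = h_{\bar x} + g$ with $g = f - h_{\bar x}$. By Definition \ref{def:mestim}(i) one has $g(\bar x) = f(\bar x) - h_{\bar x}(\bar x) = \nullv$, and by Definition \ref{def:mestim}(ii) one has $\lip{g}{\bar x} \le \mu$; thus $f$ is exhibited as a Lipschitz perturbation of $h_{\bar x}$ whose modulus at $\bar x$ does not exceed $\mu$.

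Next I would read hypothesis (ii) through (\ref{eq:lopregrel}). Since $\lop{h_{\bar x}}{\bar x} > \mu \ge 0$, the estimator $h_{\bar x}$ is metrically regular around $\bar x$ with $\reg{h_{\bar x}}{\bar x} = 1/\lop{h_{\bar x}}{\bar x} < 1/\mu$ (the bound being merely finite when $\mu = 0$), so the smallness condition $\mu\,\reg{h_{\bar x}}{\bar x} < 1$ holds. The perturbation theorem then applies to $f = h_{\bar x} + g$ and yields metric regularity of $f$ around $\bar x$ with $\reg{f}{\bar x} \le \reg{h_{\bar x}}{\bar x}/(1 - \mu\,\reg{h_{\bar x}}{\bar x})$. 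Writing $a = \lop{h_{\bar x}}{\bar x}$ and substituting $\reg{h_{\bar x}}{\bar x} = 1/a$, the right-hand side collapses to $1/(a - \mu)$, whence (\ref{eq:lopregrel}) gives $\lop{f}{\bar x} = 1/\reg{f}{\bar x} \ge a - \mu = \lop{h_{\bar x}}{\bar x} - \mu$, the claimed bound; in particular $f$ is linearly open around $\bar x$. The $\alpha$-covering assertion then follows at once from Remark \ref{rem:lopenacov}, since any $\alpha < \lop{h_{\bar x}}{\bar x} - \mu \le \lop{f}{\bar x}$ makes $f$ $\alpha$-covering at $\bar x$.

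I expect the genuine content, and hence the main obstacle, to reside entirely in the perturbation step: justifying that metric regularity of $h_{\bar x}$ survives an additive Lipschitz perturbation of modulus below the regularity threshold. If one prefers not to invoke \cite[Theorem 3F.1]{DonRoc14} as a black box, this has to be established by the classical Milyutin iteration. Fixing $y$ near $f(\bar x)$ and $x$ near $\bar x$, one builds successive approximations $x_{k+1}$ solving $h_{\bar x}(x_{k+1}) = y - g(x_k)$ by means of the metric regularity of $h_{\bar x}$; since $h_{\bar x}(x_k) = y - g(x_{k-1})$ at the previous step, the residual is $g(x_{k-1}) - g(x_k)$, so $\|x_{k+1} - x_k\| \le \mu\,\reg{h_{\bar x}}{\bar x}\,\|x_k - x_{k-1}\|$ and the sequence is Cauchy with geometric ratio $\mu\,\reg{h_{\bar x}}{\bar x} < 1$. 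Its limit $x_*$ satisfies $f(x_*) = y$ (here the continuity of $h_{\bar x}$ and $g$ enters, to pass to the limit) and $\|x_* - x\| \le \reg{h_{\bar x}}{\bar x}(1 - \mu\,\reg{h_{\bar x}}{\bar x})^{-1}\|y - f(x)\|$, which is exactly the sought metric-regularity estimate. The delicate bookkeeping lies in controlling the neighbourhoods $U$ and $V$ along the iteration and in reconciling the open/closed-ball conventions of (\ref{in:lopdef}) and Definition \ref{def:acover}; it is the relation (\ref{eq:lopregrel}) that makes the final constant emerge as the clean difference $\lop{h_{\bar x}}{\bar x} - \mu$.
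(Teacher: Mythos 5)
Your argument is correct and coincides with the paper's: the paper offers no separate proof of Proposition \ref{pro:lopenest}, presenting it explicitly as a reformulation of \cite[Theorem 3F.1]{DonRoc14} adapted through the relation (\ref{eq:lopregrel}), which is exactly your reduction $f=h_{\bar x}+g$ with $\lip{g}{\bar x}\le\mu$ and the computation $\reg{f}{\bar x}\le(1/a)/(1-\mu/a)=1/(a-\mu)$. Your appended Milyutin iteration is simply the standard proof of the cited perturbation theorem, so it adds self-containedness but no genuinely different route.
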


The impact of Proposition \ref{pro:lopenest} becomes clear when
studying linear openness of $h_{\bar x}$ is easier than studying
the same property of $f$, by virtue of the specific features of
$h_{\bar x}$ (e.g. linearity, positive homogeneity, some form
of convexity property for vector-valued mappings, and so on).


Together with surjection properties, an enhanced (again, metric) form of injectivity
will be employed in the sequel. To the best of the author's knowledge,
such a property was first formalized as follows in \cite{Gutu22},
wherefrom the terminology is borrowed.

\begin{definition}    \label{def:minjproper}
A mapping $f:\R^n\longrightarrow\R^m$ is said to be
{\it metrically injective} around $\bar x\in\R^n$ if there exist constants
$\beta,\, \delta>0$ such that
\begin{equation}     \label{in:defminj}
  \|f(x_1)-f(x_2)\|\ge\beta \|x_1-x_2\|,\quad\forall x_1,\, x_2\in
  \ball{\bar x}{\delta}.
\end{equation}
The value
$$
   \inj{f}{\bar x}=\sup\{\beta>0\ |\ \exists\delta>0 \hbox{ for which
   (\ref{in:defminj}) holds}\,\}
$$
is called (exact) {\it metric injection bound} of $f$ around $\bar x$.
\end{definition}

\begin{remark}
(i) It is plain to see that, whenever a mapping $f$ is metrically injective
around a point, it must be locally injective around that point,
whereas the converse implication fails to be true, in general
(see Example \ref{ex:minjlinmap}).

(ii) Notice that the metric injection bound of a mapping $f$ around
$\bar x$ can be expressed in terms of displacement rate of the
values taken by $f$ as follows
$$
  \inj{f}{\bar x}=\liminf_{x_1,\, x_2\to\bar x\atop x_1\ne x_2}
  {\|f(x_1)-f(x_2)\|\over\|x_1-x_2\|}.
$$
\end{remark}

\begin{example}[Characterization of metric injectivity for linear
mappings] Let $l:\R^n\longrightarrow\R^m$ be a linear mapping.
As a preliminary remark about metric injectivity of linear mappings,
notice that, because this property implies mere injectivity and hence the relation
$\dmn\krn l+\dmn\img l=0+\dmn\img l=n$ must be true, then a necessary
dimensional condition for metric injectivity of linear mapping to hold
is that $m\ge n$.

Then, the following are equivalent:

\begin{itemize}

\item[(i)] $l$ is metrically injective all over $\R^n$
(i.e. inequality (\ref{in:defminj}) holds with $\delta=+\infty$);

\item[(ii)] $l$ is metrically injective around $\nullv$;

\item[(iii)] $\dist{\nullv}{l[\Usfer]}>0$.

\end{itemize}

Implication (i) $\Rightarrow$ (ii) is obvious.

(ii) $\Rightarrow$ (iii): Since, in particular, for some $\beta,\,\delta>0$, it is
$\|l[x]\|\ge\beta\|x\|$ for every $x\in\ball{\nullv}{\delta}$, one obtains
by linearity
\begin{equation}    \label{in:lbetaUsfer}
  \|l[u]\|\ge\beta,\quad\forall u\in\Usfer,
\end{equation}
whence, passing to the infimum over $\Usfer$, it immediately follows
$$
  \dist{\nullv}{l[\Usfer]}\ge\beta>0.
$$

(iii) $\Rightarrow$ (i): Set $\beta=\dist{\nullv}{l[\Usfer]}$. Then
inequality (\ref{in:lbetaUsfer}) must be true. Thus, by taking
an arbitrary pair $x_1,\, x_2\in\R^n$, with $x_1\ne x_2$, one can
write
$$
 \left\|l\left[{x_1-x_2\over \|x_1-x_2\|}\right]\right\|\ge\beta.
$$
From the last inequality, one obtains by linearity
\begin{equation}    \label{in:minjglol}
  \|l[x_1]-l[x_2]\|\ge\beta\|x_1-x_2\|,\quad\forall x_1,\, x_2\in\R^n.
\end{equation}
The reader should notice that, by exploiting the above inequalities,
one can deduce also that it holds
$$
  \inj{l}{x}=
  \injg{l}=\sup\{\beta>0\ |\ (\ref{in:minjglol}) \hbox{ holds }\}
  =\dist{\nullv}{l[\Usfer]},\ \forall x\in\R^n.
$$
In operator theory, the quantity $\dist{\nullv}{l[\Usfer]}$
is connected with the Banach constant of a bounded linear
operator between Banach spaces. More precisely, if referred to
the adjoint operator to $l$, here denoted by $l^*$, the Banach constant
quantifies the (global) linear openness of an onto operator,
i.e. $\lop{l}{\nullv}=\lop{l}{x}=\cov{l}{\nullv}=
\dist{\nullv}{l^*[\Usfer]}$ (see, for instance,
\cite[Corollary 1.58]{Mord06}). This notion appears
in \cite{JaLaMa19} under the name of surjectivity index.
The link between injectivity of $l$ and openness of $l^*$
is well known in operator theory and it is summarized by
the condition on the annihilator of the kernel of a regular
operator: $(\krn l)^\bot=\img l^*$, where $S^\bot$ denotes the
annihilator of a subspace $S$ of a Banach space.
\end{example}

\begin{example}[P.h. scalar functions]
Let $h:\R\longrightarrow\R$ be a p.h. continuous function, taking
the form
$$
  h(x)=\left\{\begin{array}{ll}
  \theta_+x, & \quad\ \hbox{ if } x\ge 0, \\
  \\
  \theta_- x, & \quad\ \hbox{ if } x<0,
  \end{array}
  \right.
$$
for proper $\theta_+,\, \theta_-\in\R$.
Then $h$ is metrically injective around $0$ iff it is locally injective
around the same point. This happens iff $h$ is injective all over $\R$,
namely iff it is strictly monotone (increasing or decreasing), or,
equivalently, iff
\begin{equation}     \label{in:monotthetacond}
  \theta_-\cdot\theta_+>0.
\end{equation}
Upon condition (\ref{in:monotthetacond}), if $0<\theta_-\le\theta_+$,
$h$ turns out to be sublinear and $0\not\in\partial h(0)=[\theta_-,\theta_+]$.
Upon condition (\ref{in:monotthetacond}), if $0<\theta_+\le\theta_-$,
$h$ turns out to be superlinear and $0\not\in\partial^+ h(0)=[\theta_+,\theta_-]$.
Again under condition (\ref{in:monotthetacond}), if $\theta_-\le\theta_+<0$,
$h$ is sublinear and $0\not\in\partial h(0)=[\theta_-,\theta_+]$, whereas
if $\theta_+\le\theta_-<0$, $h$ is superlinear and $0\not\in\partial^+
h(0)=[\theta_+,\theta_-]$. In any case,
$$
  \inj{h}{0}=\min\{|\theta_+|,|\theta_-|\}=
  \left\{\begin{array}{ll}
  \dist{0}{\partial h(0)}, & \quad\ \hbox{ if $h$ is sublinear}, \\
  \\
  \dist{0}{\partial^+ h(0)}, & \quad\ \hbox{ if $h$ is superlinear}.
  \end{array}
  \right.
$$

\end{example}

The next proposition establishes a useful stability behaviour of metric
injectivity under additive Lipschitz perturbations.

\begin{proposition}    \label{pro:minjsta}
Given a mapping $g:\R^n\longrightarrow\R^m$, suppose that:

\begin{itemize}

\item[(i)] $g$ is metrically injective around $\bar x\in\R^n$;

\item[(ii)] $h$ is locally Lipschitz around $\bar x$, with
$\lip{h}{\bar x}<\inj{g}{\bar x}$.

\end{itemize}
Then, the mapping $g+h$ is metrically injective, with
\begin{equation}    \label{in:minjsum}
  \inj{g+h}{\bar x}\ge \inj{g}{\bar x}-\lip{h}{\bar x}.
\end{equation}
\end{proposition}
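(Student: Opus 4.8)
The plan is to reduce the statement to the reverse triangle inequality, exploiting that both defining quantities are attained as suprema/infima of admissible constants. First I would fix two auxiliary constants $\beta$ and $\ell$ satisfying
\[
  \lip{h}{\bar x} < \ell < \beta < \inj{g}{\bar x}.
\]
Such a choice is possible precisely because hypothesis (ii) grants the strict inequality $\lip{h}{\bar x} < \inj{g}{\bar x}$, and it automatically forces $\beta - \ell > 0$. By the definition of the metric injection bound (Definition \ref{def:minjproper}), the choice $\beta < \inj{g}{\bar x}$ provides a radius $\delta_1 > 0$ with $\|g(x_1)-g(x_2)\| \ge \beta\|x_1-x_2\|$ for all $x_1, x_2 \in \ball{\bar x}{\delta_1}$; likewise, the choice $\ell > \lip{h}{\bar x}$ provides a radius $\delta_2 > 0$ with $\|h(x_1)-h(x_2)\| \le \ell\|x_1-x_2\|$ for all $x_1, x_2 \in \ball{\bar x}{\delta_2}$.

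Next I would pass to the common ball $\ball{\bar x}{\delta}$, with $\delta = \min\{\delta_1,\delta_2\}$, so that both estimates are simultaneously in force. For any $x_1, x_2$ in this ball, decomposing $(g+h)(x_1)-(g+h)(x_2) = [g(x_1)-g(x_2)] + [h(x_1)-h(x_2)]$ and invoking the reverse triangle inequality gives
\[
  \|(g+h)(x_1)-(g+h)(x_2)\| \ge \|g(x_1)-g(x_2)\| - \|h(x_1)-h(x_2)\| \ge (\beta-\ell)\|x_1-x_2\|.
\]
Since $\beta - \ell > 0$, this is exactly a metric injectivity estimate for $g+h$ around $\bar x$, whence $g+h$ is metrically injective with $\inj{g+h}{\bar x} \ge \beta - \ell$.

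To upgrade this to the sharp bound (\ref{in:minjsum}) I would let $\beta \uparrow \inj{g}{\bar x}$ and $\ell \downarrow \lip{h}{\bar x}$; keeping $\ell < \beta$ throughout, the quantity $\beta - \ell$ can be driven arbitrarily close to $\inj{g}{\bar x} - \lip{h}{\bar x}$ from below, and since each such $\beta - \ell$ is an admissible injection constant for $g+h$, taking the supremum yields $\inj{g+h}{\bar x} \ge \inj{g}{\bar x} - \lip{h}{\bar x}$. I do not anticipate a genuine obstacle: the argument is a one-line perturbation estimate, and the only point demanding care is the bookkeeping of the two radii, which must be intersected so that the injectivity estimate for $g$ and the Lipschitz estimate for $h$ hold on the same neighbourhood. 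Alternatively, one could run the entire computation through the $\liminf$ characterization of the injection bound recorded in the remark following Definition \ref{def:minjproper}, which dispenses with the explicit radii but reduces to the same reverse-triangle-inequality estimate.
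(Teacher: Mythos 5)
Your proposal is correct and follows essentially the same route as the paper's own proof: the reverse triangle inequality applied to the decomposition $(g+h)(x_1)-(g+h)(x_2)=[g(x_1)-g(x_2)]+[h(x_1)-h(x_2)]$, estimates on the intersected ball $\ball{\bar x}{\min\{\delta_1,\delta_2\}}$, and a limiting argument in $\beta$ and $\ell$ to reach the bound (\ref{in:minjsum}). Your explicit insistence on $\ell<\beta$ (guaranteeing $\beta-\ell>0$ so the intermediate estimate genuinely certifies metric injectivity) is a small tidying of the paper's argument, not a different one.
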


\begin{proof}
It is clear that, by the triangle inequality, it holds
\begin{eqnarray*}
  \|g(x_1)-g(x_2)\| &\le& \|g(x_1)+h(x_1)-g(x_2)-h(x_2)\| \\
  & + &\|h(x_1)-h(x_2)\|, \quad\forall x_1,\, x_2\in\R^n,
\end{eqnarray*}
whence it follows
\begin{equation}   \label{in:f+gge}
  \|(g+h)(x_1)-(g+h)(x_2)\|\ge \|g(x_1)-g(x_2)\|-\|h(x_1)-h(x_2)\|,
  \quad\forall x_1,\, x_2\in\R^n.
\end{equation}
By hypothesis (i), taken an arbitrary $\beta\in (0,\inj{g}{\bar x})$
there exists $\delta_\beta>0$ such that
\begin{equation}    \label{in:fminj}
     \|g(x_1)-g(x_2)\|\ge\beta\|x_1-x_2\|,\quad\forall
    x_1,\, x_2\in\ball{\bar x}{\delta_\beta}.
\end{equation}
By hypothesis (ii), taken an arbitrary $\ell>\lip{h}{\bar x}$
there exists $\delta_\ell>0$ such that
\begin{equation}  \label{in:hloclip}
     \|h(x_1)-h(x_2)\|\le \ell\|x_1-x_2\|,\quad\forall
    x_1,\, x_2\in\ball{\bar x}{\delta_\ell}.
\end{equation}
Thus, by taking $\delta=\min\{\delta_\beta,\, \delta_\ell\}$ and combining inequalities
(\ref{in:f+gge}), (\ref{in:fminj}), and (\ref{in:hloclip}), one obtains
$$
  \|(g+h)(x_1)-(g+h)(x_2)\|\ge (\beta-\ell)\|x_1-x_2\|,\quad\forall
    x_1,\, x_2\in\ball{\bar x}{\delta},
$$
which shows that $g+h$ is metrically injective around $\bar x$ and
$$
  \inj{g+h}{\bar x}\ge \beta-\ell.
$$
Since the last inequality is true for every $\beta<\inj{g}{\bar x}$
and $\ell>\lip{h}{\bar x}$, then by passing to the supremum over $\beta$
and the infimum over $\ell$, one gets inequality (\ref{in:minjsum}), thereby
completing the proof.
\end{proof}

In order for demonstrating that an analogous stability
behaviour does not hold for mere injectivity, one can consider the
following example.

\begin{example}     \label{ex:minjlinmap}
Let $f:\R\longrightarrow\R$ be given by $f(x)=x^3$. This function
is globally injective. Therefore, fixing $\bar x=0$, $f$ is injective
in particular in a neighbourhood of $0$, whereas it fails to be
metrically injective at the same point. As a perturbation terms
let us take the linear (and hence Lipschitz continuous) functions
$h_\zeta:\R\longrightarrow\R$, given by
$$
  h_\zeta(x)=-\zeta^2x,\qquad \zeta\in\R.
$$
For every $\delta>0$ one can find a function $h_\zeta$, with
$\zeta\in (0,\delta)$, such that the sum $f+h_\zeta$, namely
the function
$$
  x\mapsto f(x)+h_\zeta(x)=x(x-\zeta)(x+\zeta),
$$
is clearly not injective in $[-\delta,\delta]$. Notice that
$\lip{h_\zeta}{0}=\zeta^2$ can be chosen arbitrarily small.
\end{example}

The stability behaviour stated in Proposition \ref{pro:minjsta}
enables one to employ estimators for detecting metric injectivity,
in the same way as done for linear openness.

\begin{corollary}[Metric injectivity via strict estimators]    \label{cor:minjest}
Let $f:\R^n\longrightarrow\R^m$ be a mapping and let $\bar x\in\R^n$.
If $f$ admits a strict $\eta$-estimator $h_{\bar x}$ at $\bar x$ and
$\inj{h_{\bar x}}{\bar x}>\eta$,
then $f$ inherits from $h_{\bar x}$ the metric injectivity around
$\bar x$, with bound
$$
  \inj{f}{\bar x}\ge\inj{h_{\bar x}}{\bar x}-\eta.
$$
\end{corollary}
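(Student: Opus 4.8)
The plan is to derive Corollary \ref{cor:minjest} directly from Proposition \ref{pro:minjsta} by a suitable decomposition of $f$ into a metrically injective part and a Lipschitz perturbation. The key observation is that a strict $\eta$-estimator $h_{\bar x}$ of $f$ at $\bar x$ allows us to write $f = h_{\bar x} + (f - h_{\bar x})$, where the first summand carries the metric injectivity and the second is the controlled perturbation. By Definition \ref{def:mestim}(ii), the perturbation term $r_{\bar x} := f - h_{\bar x}$ satisfies $\lip{r_{\bar x}}{\bar x}\le\eta$, so it is locally Lipschitz around $\bar x$ with Lipschitz modulus at most $\eta$.

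First I would set $g := h_{\bar x}$ and $h := r_{\bar x} = f - h_{\bar x}$, so that $g + h = f$. Hypothesis (i) of Proposition \ref{pro:minjsta} is then furnished by the standing assumption $\inj{h_{\bar x}}{\bar x}>\eta>0$, which in particular guarantees that $h_{\bar x}$ is metrically injective around $\bar x$. To verify hypothesis (ii) of that proposition, I would invoke $\lip{h}{\bar x}=\lip{f-h_{\bar x}}{\bar x}\le\eta<\inj{h_{\bar x}}{\bar x}=\inj{g}{\bar x}$, where the strict inequality is exactly the hypothesis $\inj{h_{\bar x}}{\bar x}>\eta$ of the corollary. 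With both hypotheses in place, Proposition \ref{pro:minjsta} applies to $g+h=f$ and yields that $f$ is metrically injective around $\bar x$ together with the bound
$$
  \inj{f}{\bar x}=\inj{g+h}{\bar x}\ge\inj{g}{\bar x}-\lip{h}{\bar x}
  =\inj{h_{\bar x}}{\bar x}-\lip{f-h_{\bar x}}{\bar x}.
$$

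To close the estimate, I would use $\lip{f-h_{\bar x}}{\bar x}\le\eta$ once more, which upgrades the right-hand side downward to $\inj{h_{\bar x}}{\bar x}-\eta$, producing the asserted inequality $\inj{f}{\bar x}\ge\inj{h_{\bar x}}{\bar x}-\eta$. I do not anticipate any genuine obstacle here: the corollary is essentially a translation of the additive-perturbation stability of Proposition \ref{pro:minjsta} into the language of strict estimators, entirely parallel to how Proposition \ref{pro:lopenest} specializes the linear-openness stability. The only point requiring mild care is the direction of the estimate on $\lip{f-h_{\bar x}}{\bar x}$ — since Definition \ref{def:mestim}(ii) provides an upper bound $\le\eta$ rather than an equality, one must check that subtracting a larger quantity still yields a valid lower bound for $\inj{f}{\bar x}$, which it does because the map $t\mapsto\inj{h_{\bar x}}{\bar x}-t$ is decreasing. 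Hence the bound with $\eta$ is legitimate and possibly conservative, exactly as in the strict-estimator formulation.
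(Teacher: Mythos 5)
Your proposal is correct and follows exactly the paper's own route: the paper's proof is a one-line application of Proposition \ref{pro:minjsta} with the same decomposition $g=h_{\bar x}$, $h=f-h_{\bar x}$, and your additional remarks (the monotonicity of $t\mapsto\inj{h_{\bar x}}{\bar x}-t$ justifying the passage from $\lip{f-h_{\bar x}}{\bar x}$ to $\eta$) merely make explicit what the paper leaves implicit. The only trivial slip is writing $\inj{h_{\bar x}}{\bar x}>\eta>0$, since $\eta=0$ is allowed; but $\inj{h_{\bar x}}{\bar x}>\eta\ge 0$ still yields the metric injectivity of $h_{\bar x}$, so nothing breaks.
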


\begin{proof}
Since by hypothesis it is $\lip{f-h_{\bar x}}{\bar x}\le\eta$, it suffices to
apply Proposition \ref{pro:minjsta} with $g=h_{\bar x}$ and $h=f-h_{\bar x}$.
\end{proof}

On the base of the above constructions, the following condition for global
inversion can be established.

\begin{proposition}    \label{pro:gloinvest}
Let $f:\R^n\longrightarrow\R^n$ be a mapping and let $\mu\ge 0$.
Suppose that:

\begin{itemize}

\item[(i)] $f$ is continuous on $\R^n$;

\item[(ii)] for every $x\in\R^n$ $f$ admits a strict $\eta_x$-estimator $g_x$
such that $\inj{g_x}{x}>\eta_x$;

\item[(iii)] for every $x\in\R^n$ $f$ admits a strict $\mu$-estimator
$h_x$ and
$$
  \sigma_f=\inf_{x\in\R^n}\lop{h_x}{x}>\mu.
$$
\end{itemize}

\noindent Then,
\begin{itemize}

\item[(t)] $\exists f^{-1}:\R^n\longrightarrow\R^n$;

\item[(tt)] $f^{-1}$ is Lipschitz continuous on $\R^n$
with constant $(\sigma_f-\mu)^{-1}$.

\end{itemize}
\end{proposition}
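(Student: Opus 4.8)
The plan is to reduce the assertion to an application of Theorem~\ref{thm:AruZhu}, whose three hypotheses (continuity, local injectivity, and $\alpha$-covering at each point) I would verify in turn, and then to optimize the resulting Lipschitz constant over the admissible covering rates. Continuity is nothing but hypothesis (i), so the real work concentrates on the remaining two properties, which I would extract from the estimator hypotheses (ii) and (iii) by invoking the stability results already at our disposal.

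First I would establish local injectivity. Fixing $x\in\R^n$, hypothesis (ii) provides a strict $\eta_x$-estimator $g_x$ with $\inj{g_x}{x}>\eta_x$. Corollary~\ref{cor:minjest} then yields that $f$ is metrically injective around $x$, with $\inj{f}{x}\ge\inj{g_x}{x}-\eta_x>0$. Since metric injectivity around a point entails local injectivity around it (as recorded in the remark following Definition~\ref{def:minjproper}), and since $x$ was arbitrary, $f$ is locally injective on $\R^n$.

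Next I would produce a single covering rate valid uniformly over $\R^n$. Fix any $\alpha$ with $\alpha<\sigma_f-\mu$. For each $x\in\R^n$, hypothesis (iii) furnishes a strict $\mu$-estimator $h_x$ with $\lop{h_x}{x}\ge\sigma_f>\mu$, so Proposition~\ref{pro:lopenest} applies and gives that $f$ is $\beta$-covering at $x$ for every $\beta<\lop{h_x}{x}-\mu$. Because $\alpha<\sigma_f-\mu\le\lop{h_x}{x}-\mu$, the chosen $\alpha$ is one such admissible $\beta$ at \emph{every} point $x$; hence $f$ is $\alpha$-covering at each $x\in\R^n$ with one and the same $\alpha$. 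Together with continuity and the local injectivity from the previous step, all hypotheses of Theorem~\ref{thm:AruZhu} are met, so $f^{-1}:\R^n\longrightarrow\R^n$ exists and is Lipschitz continuous with constant $\alpha^{-1}$.

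This settles the existence claim (t) at once. For the quantitative claim (tt), I would let $\alpha$ increase to $\sigma_f-\mu$: the Lipschitz inequality $\|f^{-1}(y_1)-f^{-1}(y_2)\|\le\alpha^{-1}\|y_1-y_2\|$ holds for each admissible $\alpha$, and taking the infimum of the constants $\alpha^{-1}$ as $\alpha\uparrow\sigma_f-\mu$ delivers the sharp bound $(\sigma_f-\mu)^{-1}$. The only genuinely delicate point is the \emph{uniformity} in the covering step: the estimator-based openness of Proposition~\ref{pro:lopenest} is inherently point-based, so it is essential that the gap $\sigma_f-\mu$, built from the \emph{infimum} $\sigma_f=\inf_{x\in\R^n}\lop{h_x}{x}$, be strictly positive, thereby guaranteeing a common $\alpha$ that works at every point simultaneously. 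I expect this passage from pointwise to uniform covering to be where the hypothesis $\sigma_f>\mu$ does its real work.
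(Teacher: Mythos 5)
Your proposal is correct and follows essentially the same route as the paper's own proof: Corollary~\ref{cor:minjest} for metric (hence local) injectivity, Proposition~\ref{pro:lopenest} together with $\sigma_f>\mu$ for a uniform covering rate $\alpha<\sigma_f-\mu$, Theorem~\ref{thm:AruZhu} for the inverse, and letting $\alpha\uparrow\sigma_f-\mu$ for the constant. The only cosmetic difference is that the paper additionally cites Remark~\ref{rem:lopenacov} to pass from pointwise $\alpha$-covering to the global covering property, a step your direct reading of hypothesis (iii) of Theorem~\ref{thm:AruZhu} renders unnecessary.
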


\begin{proof}
By hypothesis (ii) and Corollary \ref{cor:minjest}, $f$ is metrically
injective at each point of $\R^n$ and hence locally injective.
Since for any $x\in\R^n$ $f$ admits a strict $\mu$-estimator $h_x$,
with $\lop{h_x}{x}>\mu$, according to Proposition \ref{pro:lopenest} $f$ is
$\alpha$-covering at $x$, with any $\alpha<\sigma_f-\mu$. As
a consequence, by recalling what observed in Remark \ref{rem:lopenacov},
$f$ is $\alpha$-covering with any $\alpha<\sigma_f-\mu$.
These facts mean that all the hypotheses of Theorem \ref{thm:AruZhu}
happen to be fulfilled. Accordingly, there exists $f^{-1}:\R^n\longrightarrow\R^n$,
which is Lipschitz continuous on $\R^n$ with constant
$$
  \alpha^{-1}>{1\over \sigma_f-\mu}.
$$
By arbitrariness of $\alpha<\sigma_f-\mu$ also assertion (tt)
in the thesis follows. This completes the proof.
\end{proof}

The aim of Proposition \ref{pro:gloinvest} is to define an interplay
between metric properties and approximation tools, which rules
viable methods for deriving conditions for global invertibility,
which are formulated
in terms of more specific nonsmooth analysis constructions.
In order to implement the above scheme, the next property may be of help.

\begin{definition}[Strong regularity]    \label{def:strreg}
A mapping $f:\R^n\longrightarrow\R^m$ is said to be {\it strongly metrically regular}
(henceforth, for short, {\it strongly regular}) around a point $\bar x\in\R^n$
if it has both the following properties:
\begin{itemize}

\item[(i)] $f$ is metrically regular (equivalently, linearly open)
around $\bar x$;

\item[(ii)] the set-valued mapping $f^{-1}:\R^m\rightrightarrows\R^n$
has a graphical localization around $(f(\bar x),\bar x)$, which is
nowhere multi-valued, i.e. there exist neighbourhoods $V$ of $f(\bar x)$
and $U$ of $\bar x$ such that
$$
  f^{-1}(y)\cap U=\{x\},\quad\forall y\in V.$$

\end{itemize}
\end{definition}

The above enhanced form of metric regularity can be equivalently reformulated
by postulating that the multifunction $f^{-1}$ admits a
single-valued localization $f^\sharp:V\longrightarrow\R^n$ around
$(f(\bar x),\bar x)$, which is locally Lipschitz around $f(\bar x)$,
with
\begin{equation}     \label{eq:lipreglop}
  \lip{f^\sharp}{f(\bar x)}=\reg{f}{\bar x}=\lop{f}{\bar x}^{-1}.
\end{equation}
(see \cite[Proposition 3G.1]{DonRoc14}).

For the purposes of the present paper, it is convenient to provide
the following characterization of strong regularity for continuous
mapping, which involves the notion of metric injection. To the best
of the author's knowledge, it remains still unnoticed within the variational
analysis literature.

\begin{proposition}  \label{pro:strrelchar}
Let $f:\R^n\longrightarrow\R^m$ be a mapping continuous at $\bar x\in\R^n$.
$f$ is strongly regular around $\bar x$ iff it is metrically regular and
metrically injective around $\bar x$. In such an event, it holds
\begin{equation}   \label{in:injloprel}
  \inj{f}{\bar x}\ge\lop{f}{\bar x}.
\end{equation}
\end{proposition}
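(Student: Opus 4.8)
The plan is to establish the equivalence by proving the two implications separately and to obtain the quantitative bound (\ref{in:injloprel}) as a by-product of the ``strong regularity $\Rightarrow$ metric injectivity'' direction. Since metric regularity appears literally as item (i) of Definition \ref{def:strreg}, it is common to both sides of the equivalence; hence the genuine content is the interplay between metric injectivity and the single-valuedness of the localization of $f^{-1}$ demanded by item (ii).

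For the implication ``metrically regular $+$ metrically injective $\Rightarrow$ strongly regular'' I would argue as follows. Metric injectivity around $\bar x$ furnishes $\beta,\delta>0$ with $\|f(x_1)-f(x_2)\|\ge\beta\|x_1-x_2\|$ on $\ball{\bar x}{\delta}$; in particular $f$ is injective on the open ball $U_0=\oball{\bar x}{\delta}$, so $f^{-1}(y)\cap U_0$ contains at most one point for every $y$. On the other hand, metric regularity is equivalent to linear openness of $f$ around $\bar x$, so there are $\alpha>0$ and neighbourhoods of $\bar x$ and $f(\bar x)$ for which (\ref{in:lopdef}) holds; applying that inclusion at $x=\bar x$ and shrinking the radius, I would pick $r>0$ small enough that $\oball{\bar x}{r}\subseteq U_0$ and that $\oball{f(\bar x)}{\alpha r}$ lies inside the openness neighbourhood, so that every $y\in V:=\oball{f(\bar x)}{\alpha r}$ admits a preimage inside $\oball{\bar x}{r}$. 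Setting $U=\oball{\bar x}{r}$, the two facts combine to give $f^{-1}(y)\cap U=\{x\}$ for all $y\in V$, which is exactly item (ii); item (i) being metric regularity itself, $f$ is strongly regular around $\bar x$. This direction uses neither continuity nor the precise value of the bounds.

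For the converse I would invoke the reformulation recorded in (\ref{eq:lipreglop}): $f^{-1}$ admits a single-valued localization $f^\sharp:V\longrightarrow\R^n$ around $(f(\bar x),\bar x)$ that is locally Lipschitz at $f(\bar x)$ with $\lip{f^\sharp}{f(\bar x)}=\lop{f}{\bar x}^{-1}$. Fix any $\ell>\lip{f^\sharp}{f(\bar x)}$ together with a neighbourhood of $f(\bar x)$ on which $f^\sharp$ is $\ell$-Lipschitz. Here continuity of $f$ at $\bar x$ enters decisively: it lets me choose $\delta>0$ so small that $x\in\ball{\bar x}{\delta}$ forces $f(x)$ simultaneously into that neighbourhood and into $V$, whence $f^\sharp(f(x))=x$ by single-valuedness of the localization. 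Then, for $x_1,x_2\in\ball{\bar x}{\delta}$,
\[
  \|x_1-x_2\|=\|f^\sharp(f(x_1))-f^\sharp(f(x_2))\|\le\ell\,\|f(x_1)-f(x_2)\|,
\]
so $f$ is metrically injective around $\bar x$ with $\inj{f}{\bar x}\ge\ell^{-1}$. Letting $\ell\downarrow\lop{f}{\bar x}^{-1}$ delivers both metric injectivity and the inequality (\ref{in:injloprel}).

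I expect the only delicate point to be the identity $f^\sharp\circ f=\id$ near $\bar x$ in the converse direction: it holds only where the values of $f$ remain inside the localization domain $V$, and confining them there is precisely what continuity at $\bar x$ buys. Everything else reduces to combining injectivity on a ball with the surjectivity-onto-a-neighbourhood built into linear openness, and to the one-line Lipschitz estimate above.
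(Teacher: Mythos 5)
Your proposal is correct and takes essentially the same route as the paper's own proof: the forward implication combines the ball covering supplied by linear openness at $\bar x$ with uniqueness supplied by metric injectivity to build the single-valued localization, and the converse runs the Lipschitz localization $f^\sharp$ through the identity $f^\sharp\circ f=\id$ near $\bar x$ and lets $\ell\downarrow\reg{f}{\bar x}=\lop{f}{\bar x}^{-1}$ to obtain (\ref{in:injloprel}). One small patch in the converse: $f^\sharp(f(x))$ is by definition the unique point of $f^{-1}(f(x))\cap U$, so the identity $f^\sharp(f(x))=x$ requires $x\in U$ in addition to $f(x)\in V$ (single-valuedness alone does not give it); this is fixed by also shrinking $\delta$ so that $\ball{\bar x}{\delta}\subseteq U$, exactly as the paper does with its inclusion $\ball{\bar x}{r}\subseteq U_\kappa$ in (\ref{in:2BUkfBVk}).
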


\begin{proof}
Assume first that $f$ is metrically regular and metrically injective around
$\bar x$. Since $f$ is continuous at $\bar x$ and linearly open
around $\bar x$, one has in particular
\begin{equation}   \label{in:acoverfbarx}
 \ball{f(\bar x)}{\alpha r}\subseteq f(\ball{\bar x}{r}),
 \quad\forall r\in (0,r_*],
\end{equation}
for $\alpha<\lop{f}{\bar x}$ and for a proper $r_*>0$.
By the local injectivity of $f$ around $\bar x$, taken an arbitrary
$0<\beta<\inj{f}{\bar x}$ there exists $\delta_\beta>0$ such that
\begin{equation}     \label{in:minjfbarx}
  \|f(x_1)-f(x_2)\|\ge\beta\|x_1-x_2\|,\quad\forall x_1,\, x_2
  \in\ball{\bar x}{\delta_\beta}.
\end{equation}
Thus, if taking $r_0=\min\{r_*,\delta_\beta\}$, from inclusion (\ref{in:acoverfbarx})
it follows that
\begin{equation}   \label{string:y=fx}
  \forall y\in\ball{f(\bar x)}{\alpha r_0}\quad \exists x\in
  \ball{\bar x}{r_0}:\ y=f(x).
\end{equation}
Such $x$ must be unique in $\ball{\bar x}{r_0}$ by virtue of
inequality  (\ref{in:minjfbarx}). Therefore, one can define as a single-valued
graphical localization of $f^{-1}$ the mapping $f^\sharp:\ball{f(\bar x)}{\alpha r_0}
\longrightarrow\R^n$ given by $f^\sharp(y)=x$, where $x$ is as
in (\ref{string:y=fx}). Furthermore, one can readily see that
$f^\sharp$ is Lipschitz continuous in $\ball{f(\bar x)}{\alpha r_0}$,
because on account on inequality (\ref{in:minjfbarx}), it holds
\begin{eqnarray*}
   \|f^\sharp(y_1)-f^\sharp(y_2)\| &=& \|x_1-x_2\|\le
        \beta^{-1}\|f(x_1)-f(x_2)\| \\
        & = & \beta^{-1}\|y_1-y_2\|,
   \quad\forall y_1,\, y_2\in\ball{f(\bar x)}{\alpha r_0}.
\end{eqnarray*}

Vice versa, assume now that $f$ is strongly regular around $\bar x$.
According to the aforementioned equivalent reformulation, fixed
an arbitrary $\kappa>\reg{f}{\bar x}$ there exist neighbourhoods
$U_\kappa$ of $\bar x$ and $V_\kappa$ of $f(\bar x)$ and
a graphical single-valued localization $f^\sharp:V_\kappa\longrightarrow
\R^n$ such that
\begin{equation}  \label{in:Lipgphlocf}
   \|f^\sharp(y_1)-f^\sharp(y_2)\|\le\kappa\|y_1-y_2\|,\quad
   \forall y_1,\, y_2\in\ V_\kappa.
\end{equation}
Since $f$ is continuous at $\bar x$, fixing $\alpha<\lop{f}{\bar x}$,
it is possible to pick $r>0$ in such a way that
\begin{equation}     \label{in:2BUkfBVk}
   \ball{\bar x}{r}\subseteq U_\kappa \ \hbox{ and }\
    f(\ball{\bar x}{\alpha r})\subseteq V_\kappa.
\end{equation}
Take arbitrary $x_1,\, x_2\in\ball{\bar x}{r}$. By virtue of the
second inclusion in (\ref{in:2BUkfBVk}), it must be $y_i=f(x_i)\in
V_\kappa$, for $i=1,\, 2$. On the other hand, by virtue of the
first inclusion in (\ref{in:2BUkfBVk}), one has
$$
  f^\sharp(y_i)=x_i=f^{-1}(y_i)\cap U_\kappa,\quad i=1,\, 2.
$$
Therefore, by taking into account inequality (\ref{in:Lipgphlocf}),
one obtains
$$
  \|x_1-x_2\|\le\kappa\|y_1-y_2\|=\kappa\|f(x_1)-f(x_2)\|,
$$
which shows that $f$ is metrically injective around $\bar x$ with
$\inj{f}{\bar x}\ge\kappa^{-1}$.
By passing to the infimum over all $\kappa>\reg{f}{\bar x}=
\lop{f}{\bar x}^{-1}$, one achieves the inequality in (\ref{in:injloprel}).
This completes the proof.
\end{proof}

One is now in a position to formulate the main result of the current
section.

\begin{theorem}[Global invertibility via strict estimators]   \label{thm:gloinvest}
Let $f:\R^n\longrightarrow\R^n$ be a continuous mapping and let
$\mu\ge 0$.
Suppose that for every $x\in\R^n$ $f$ admits a strict $\mu$-estimator
$h_x$, which is strongly regular around $x$, and such that
\begin{equation}   \label{in:gloinvestcon}
   \sigma_f=\inf_{x\in\R^n}\lop{h_x}{x}>\mu.
\end{equation}
\noindent Then,
\begin{itemize}

\item[(t)] $\exists f^{-1}:\R^n\longrightarrow\R^n$;

\item[(tt)] $f^{-1}$ is Lipschitz continuous on $\R^n$
with constant $(\sigma_f-\mu)^{-1}$.

\end{itemize}
\end{theorem}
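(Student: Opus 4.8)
The plan is to recognize that this theorem is a specialization of Proposition \ref{pro:gloinvest}, in which a single strongly regular estimator is made to serve the two roles that, in that proposition, were assigned to separate estimators $g_x$ and $h_x$. The guiding observation is that strong regularity bundles together linear openness and metric injectivity, so that the very estimator $h_x$ controlling the covering side simultaneously controls the injectivity side. Concretely, I would invoke Proposition \ref{pro:gloinvest} with the choices $g_x=h_x$ and $\eta_x=\mu$ for every $x\in\R^n$.

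To justify these choices, I would first note that $h_x$ is continuous in a neighbourhood of $x$: indeed, $f$ is continuous by assumption and $f-h_x$ is locally Lipschitz around $x$ (its Lipschitz modulus being bounded by $\mu$, per Definition \ref{def:mestim}(ii)), so $h_x=f-(f-h_x)$ inherits continuity near $x$. This makes Proposition \ref{pro:strrelchar} applicable to $h_x$. Since $h_x$ is strongly regular around $x$ by hypothesis, that proposition yields $\inj{h_x}{x}\ge\lop{h_x}{x}$. Combining this with the defining lower bound $\lop{h_x}{x}\ge\sigma_f>\mu$, I would obtain $\inj{h_x}{x}>\mu=\eta_x$, which is precisely hypothesis (ii) of Proposition \ref{pro:gloinvest} for the choice $g_x=h_x$.

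The remaining hypotheses of Proposition \ref{pro:gloinvest} are immediate: hypothesis (i) is the assumed continuity of $f$, and hypothesis (iii)---that $f$ admits the strict $\mu$-estimators $h_x$ with $\sigma_f=\inf_{x\in\R^n}\lop{h_x}{x}>\mu$---is exactly the standing assumption (\ref{in:gloinvestcon}). With all three hypotheses verified, Proposition \ref{pro:gloinvest} delivers conclusions (t) and (tt) verbatim, including the Lipschitz constant $(\sigma_f-\mu)^{-1}$ for $f^{-1}$.

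I do not expect a genuine obstacle here, since the heavy lifting was already carried out in the preceding propositions. The only point demanding care is the transfer of the injectivity bound: one must confirm that strong regularity of $h_x$ produces metric injectivity with a bound no smaller than its openness bound, and this is exactly the content of inequality (\ref{in:injloprel}) in Proposition \ref{pro:strrelchar}. Once that link is in place, the reduction to Proposition \ref{pro:gloinvest} becomes mechanical.
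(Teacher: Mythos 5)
Your proposal is correct and follows essentially the same route as the paper's own proof: strong regularity of $h_x$ plus Proposition \ref{pro:strrelchar} yields $\inj{h_x}{x}\ge\lop{h_x}{x}>\mu$, after which the same family $\{h_x\}$ serves as both the injectivity estimators (with $g_x=h_x$, $\eta_x=\mu$) and the openness estimators in Proposition \ref{pro:gloinvest}. Your explicit verification that $h_x=f-(f-h_x)$ is continuous near $x$, so that Proposition \ref{pro:strrelchar} applies, is a small point the paper leaves implicit, and it is a welcome addition rather than a deviation.
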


\begin{proof}
For every $x\in\R^n$, in the light of Proposition \ref{pro:strrelchar},
$h_x$ is metrically injective and, by virtue of condition (\ref{in:gloinvestcon}),
taking into account inequality (\ref{in:injloprel}) one has
$\inj{h_x}{x}\ge\lop{h_x}{x}>\mu$. So $h_x$ satisfies hypothesis (ii)
of Proposition \ref{pro:gloinvest} for every $x\in\R^n$.
The same mappings play the role of a strict $\mu$-estimator of $f$
at $x$, satisfying hypothesis (iii) of
Proposition \ref{pro:gloinvest}. These facts make it possible to
invoke that result, wherefrom all the assertions
in the thesis follow.
\end{proof}

As a first comment about Theorem \ref{thm:gloinvest}, let us
observe that it can be regarded as a kind of 'globalization'
of the local invertibility condition expressed in terms of estimators
by \cite[Theorem 1E.3]{DonRoc14}. As its local counterpart,
the quantitative condition tying the parameter $\mu$ and the bound
of linear openness plays a crucial role. Roughly speaking, this condition
says that as higher is $\sigma_f$ as looser can be the estimate of $f$
provided by the family of mappings $\{h_x:\ x\in\R^n\}$. Furthermore,
the difference $\sigma_f-\mu$ takes a transparent part in
the estimate of the Lipschitz constant of $f^{-1}$.

Another point deals with the scope of Theorem \ref{thm:gloinvest}.
It offers a starting point for establishing more specific global
inversion results beyond differentiability.
An example is provided in considering the following corollary,
which is derived by replacing ${\rm C}^1$
smoothness with existence of strict first-order approximations.
Its proof follows at once on account of Example \ref{ex:strapprox}.

\begin{corollary}
Let $f:\R^n\longrightarrow\R^n$ be a continuous mapping.
If for every $x\in\R^n$ $f$ admits a strict first-order
approximation $\Apr{f}{x}{\cdot}$ at $x$, which is strongly
regular around $x$, with
$$
  \alpha_f=\inf_{x\in\R^n}\lop{\Apr{f}{x}{\cdot}}{x}>0,
$$
then
\begin{itemize}

\item[(t)] $\exists f^{-1}:\R^n\longrightarrow\R^n$;

\item[(tt)] $f^{-1}$ is Lipschitz continuous on $\R^n$
with constant $\alpha_f^{-1}$.

\end{itemize}
\end{corollary}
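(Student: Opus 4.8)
The plan is to specialize Theorem \ref{thm:gloinvest} to the degenerate case $\mu = 0$, in which the $\mu$-estimator machinery collapses onto the notion of strict first-order approximation. First I would record the observation made in Example \ref{ex:strapprox}: any strict first-order approximation $\Apr{f}{x}{\cdot}$ of $f$ at a point $x$ is, in particular, a strict $0$-estimator of $f$ at $x$. Indeed, the defining equality $\Apr{f}{x}{x} = f(x)$ supplies requirement (i) of Definition \ref{def:mestim}, while the vanishing-remainder condition (\ref{eq:defstrapprox}), namely $\lip{f-\Apr{f}{x}{\cdot}}{x} = 0 \le 0$, supplies requirement (ii) with $\mu = 0$. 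Consequently the family $\{h_x = \Apr{f}{x}{\cdot} : x \in \R^n\}$ is a family of strict $\mu$-estimators of $f$ in the sense required by Theorem \ref{thm:gloinvest}, with the choice $\mu = 0$.

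Next I would verify that the remaining hypotheses of Theorem \ref{thm:gloinvest} hold verbatim under this identification. By assumption each $h_x = \Apr{f}{x}{\cdot}$ is strongly regular around $x$, which is exactly the structural requirement imposed in Theorem \ref{thm:gloinvest}. Moreover, the threshold condition (\ref{in:gloinvestcon}) specializes to
$$
   \sigma_f = \inf_{x\in\R^n}\lop{h_x}{x}
   = \inf_{x\in\R^n}\lop{\Apr{f}{x}{\cdot}}{x}
   = \alpha_f > 0 = \mu,
$$
so that the standing hypothesis $\alpha_f > 0$ is precisely the inequality $\sigma_f > \mu$ needed to trigger the theorem. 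Invoking Theorem \ref{thm:gloinvest} then delivers the existence of a global inverse $f^{-1}:\R^n\longrightarrow\R^n$, establishing assertion (t), together with its global Lipschitz continuity with constant $(\sigma_f - \mu)^{-1} = (\alpha_f - 0)^{-1} = \alpha_f^{-1}$, establishing assertion (tt).

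There is no genuine obstacle to overcome here: the corollary is a direct reading of Theorem \ref{thm:gloinvest} through the lens of $\mu = 0$, and the continuity of $f$ needed at the top of that theorem is already part of the hypotheses. The single point deserving attention is conceptual rather than technical, namely that condition (\ref{eq:defstrapprox}) is exactly what permits $\mu$ to be taken equal to zero; it is this feature that causes the difference $\sigma_f - \mu$ to collapse to $\alpha_f$ and thereby sharpens the estimate of the Lipschitz constant of $f^{-1}$ from $(\sigma_f-\mu)^{-1}$ to the cleaner value $\alpha_f^{-1}$.
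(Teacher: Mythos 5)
Your proposal is correct and is precisely the paper's intended argument: the paper proves this corollary by the one-line remark that it ``follows at once on account of Example \ref{ex:strapprox},'' i.e.\ by reading a strict first-order approximation as a strict $\mu$-estimator with $\mu=0$ and applying Theorem \ref{thm:gloinvest}, exactly as you do. Your verification that $\sigma_f=\alpha_f>0=\mu$ and that the Lipschitz constant $(\sigma_f-\mu)^{-1}$ collapses to $\alpha_f^{-1}$ just spells out the details the paper leaves implicit.
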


\vskip1cm

\begin{example}   \label{ex:gloinvnotlip}
Consider the function $f:\R\longrightarrow\R$, defined by
$$
  f(x)=
  \left\{\begin{array}{ll}
  \sgn(x)\sqrt{|x|}, & \quad \hbox{ if } |x|\le 1, \\
  \\
  x, & \quad \hbox{ if } |x|>1,
  \end{array}
  \right.  \quad\hbox{ where }\quad
  \sgn(x)=
  \left\{\begin{array}{rl}
  -1, & \ \hbox{ if } x<0, \\
    0, & \ \hbox{ if } x=0, \\
    1, & \ \hbox{ if } x>0.
  \end{array}
  \right.
$$
Such a function is continuous in $\R$, but fails to be locally Lipschitz
in $\R$ because, as one readily sees, inequality (\ref{in:Lipcondef})
does not hold in any neighbourhood of $0$. Moreover, $f$ lacks of
differentiability at $x=0$ and at $x=\pm 1$. These pathological
features make $f$ to fall out from the scope of application of Theorem
\ref{thm:Hadamard} and Theorem \ref{thm:Pourciau}. Nevertheless, by
direct inspection one can check that $f$ is globally invertible, with
$f^{-1}:\R\longrightarrow\R$ being given by
$$
  f^{-1}(y)=
  \left\{\begin{array}{ll}
  \sgn(y)y^2, & \quad\ \hbox{ if } |y|\le 1, \\
  \\
  y, & \quad\ \hbox{ if } |y|>1.
  \end{array}
  \right.
$$
Besides, since it holds
$$
  |f^{-1}(y_1)-f^{-1}(y_2)|\le 2|y_1-y_2|,\quad\forall y_1,\, y_2\in\R,
$$
then, in contrast to $f$, function $f^{-1}$ turns out to be
Lipschitz continuous in $\R$ with constant $\ell=2$.
Let us illustrate how such an instance can be put in the framework of
global invertibility via strict estimators, where Theorem \ref{thm:gloinvest}
comes into play.
To start with, observe that $f$ admits strict $0$-estimators at each point
of $\R$. More precisely, for all those points $x\in\R$ at which $f$
is strictly differentiable one can choose as strict $0$-estimator the
affine functions  $h_x=f(x)+\Fder{f}{x}(\cdot-x)$. Thus, it results in
$$
  h_x(t)=
  \left\{\begin{array}{ll}
  f(x)+\displaystyle{t-x\over 2\sqrt{|x|}}, & \quad\ \hbox{ if } 0<|x|<1, \\
  \\
  t, & \quad\ \hbox{ if } |x|>1.
  \end{array}
  \right.
$$
For the point $x=1$, one can choose $h_1:\R\longrightarrow\R$,
given by
$$
  h_1(t)=1+\max\left\{{t-1\over 2},\, t-1\right\}.
$$
Indeed, fixed an arbitrary $\epsilon\in(0,1)$, by setting $\delta_\epsilon
={\epsilon^2\over 1+\epsilon^2}$, one finds
$$
  |(f-h_1)(x_1)-(f-h_1)(x_2)|\le \epsilon |x_1-x_2|,\quad
  \forall x_1,\, x_2\in (1-\delta_\epsilon,1+\delta_\epsilon),
$$
so $\lip{f-h_1}{1}=0$. The above inequality is obviously true if
$x_1,\, x_2\in [1,1+\delta_\epsilon)$. In the case $x_1,\, x_2
\in (1-\delta_\epsilon,1]$, since the function $x\mapsto \sqrt{x}-[1+{1\over 2}(x-1)]$
is ${\rm C}^1$ in an open set containing $[1-\delta_\epsilon,1]$,
by the mean-value theorem one can write
\begin{eqnarray*}
   |(f-h_1)(x_1)-(f-h_1)(x_2)| &\le& \sup_{x\in [1-\delta_\epsilon,1]}
   \left|{1\over 2\sqrt{x}}-{1\over 2}\right||x_1-x_2| \\
   &=& {1\over 2} \left|{1\over\sqrt{1-\delta_\epsilon}}-1\right||x_1-x_2|   \\
   &=& {\epsilon\over 2}|x_1-x_2|, \quad\forall x_1,\, x_2\in
   (1-\delta_\epsilon,1].
\end{eqnarray*}
In the case $x_1\in (1-\delta_\epsilon,1)$ and $x_2\in [1,1+\delta_\epsilon)$,
by exploiting again the above inequality for the pair $x_1,\ 1\in
(1-\delta_\epsilon,1]$, one has
\begin{eqnarray*}
  |(f-h_1)(x_1)-(f-h_1)(x_2)| &\le& |(f-h_1)(x_1)-(f-h_1)(1)| \\
  &+& |(f-h_1)(1)-(f-h_1)(x_2)| \\
  &=&  |(f-h_1)(x_1)-(f-h_1)(1)| \le{\epsilon\over 2} |x_1-1| \\
  &\le& {\epsilon\over 2}|x_1-x_2|.
\end{eqnarray*}
Similarly, one sees that for the point $x=-1$ it is possible
to take $h_{-1}:\R\longrightarrow\R$, given by
$$
  h_{-1}(t)=-1+\min\left\{{t+1\over 2},\, t+1\right\}.
$$
Finally, for $x=0$ one can choose the strict $0$-estimator $h_0:\R\longrightarrow\R$
$$
  h_0(t)=\sgn(t)\sqrt{|t|}.
$$
One has now to show that, by virtue of the above choices, each function
$h_x$ is strongly regular around $x$, then to provide an estimate of each
$\lop{h_x}{x}$, and finally to check that condition (\ref{in:gloinvestcon})
is fulfilled, namely $\sigma_f>0$.
To this purpose, notice that, in the case $0<|x|<1$, as $h_x$ are affine
and invertible, they are strongly regular around every point.
In particular, if setting $l_x(t)={t\over 2\sqrt{x}}$, one has
\begin{equation}    \label{in:HLcond1}
  \lop{h_x}{x}=\lop{l_x}{0}=\reg{l_x}{0}^{-1}={1\over 2\sqrt{x}}
  \ge {1\over 2},\quad\forall x\in\R:\ 0<|x|<1.
\end{equation}
Analogously, in the case $|x|>1$, as a linear invertible function
$h_x$ are strongly regular around each point, with
\begin{equation}    \label{in:HLcond2}
  \lop{h_x}{x}=1\ge {1\over 2},\quad\forall x\in\R:\ |x|>1
\end{equation}
(remember Example \ref{ex:minjlinmap}).
In the case $x=1$, as $h_1$ is globally invertible, having inverse
which is Lipschitz continuous in $\R^n$ with $\lip{h^{-1}_1}{1}=2$,
it is strongly regular around $x=1$, and in the light of (\ref{eq:lipreglop})
one has
\begin{equation}    \label{in:HLcond3}
   \lop{h_1}{1}={1\over 2}.
\end{equation}
The case $x=-1$ can be treated in a similar manner, after noticing
that $\gph f$ is centrally symmetric ($f$ being an odd function).
To prove that $h_0$ is strongly regular around $0$ it suffices
to observe that $h_0^{-1}$ coincides with its graphical single-valued
localization $h_0^\sharp$ (actually defined all over $\R$), given by
$$
  h_0^\sharp(y)=\sgn(y)\cdot y^2,
$$
which is locally Lipschitz around $0$. To estimate $\lop{h_0}{0}$
it is useful to notice that, since $h_0^\sharp$ is ${\rm C}^1$ around
$0$, with $\Fder{h_0^\sharp}{y}=|2y|$, a straightforward application
of the mean-value theorem gives
$$
  \lip{ h_0^\sharp}{0}\le 2.
$$
Consequently, one obtains
\begin{equation}    \label{in:HLcond4}
   \lop{h_0}{0}\ge {1\over 2}.
\end{equation}
Inequalities (\ref{in:HLcond1}), (\ref{in:HLcond2}), (\ref{in:HLcond3})
and (\ref{in:HLcond4}) allow one to deduce that $\sigma_f\ge 1/2$.
Thus, it is possible to apply Theorem \ref{thm:gloinvest}. Its
thesis leads to conclusions that are consistent with what one
can directly check.
\end{example}


\section{Invertibility via pairs of convex compacta}    \label{Sect:3}

In order to illustrate how Theorem \ref{thm:gloinvest} may interact
with constructive elements of nonsmooth analysis, let us recall
the following notions from quasidifferential calculus.

\begin{definition}   \label{def:scalqdmap}
According to \cite[Definition 2.1]{Uder07}, a mapping $g:\R^n\longrightarrow\R^m$
is said to be {\it scalarly quasidifferentiable} (for short, scalarly
q.d.) at $\bar x\in\R^n$ if for every $w\in\R^m$ the scalar function
$\langle w,g\rangle:\R^n\longrightarrow\R$, i.e. $x\mapsto\langle w,g(x)\rangle=
w^\top g(x)$, is quasidifferentiable at $\bar x$ in the sense of Demyanov-Rubinov.
\end{definition}

On the base of \cite{DemRub95,PalRol97},
Definition \ref{def:scalqdmap} amounts to say that, for every $w\in\R^m$,
$\langle w,g\rangle$ is directionally differentiable at $\bar x$, in any direction
$v\in\R^n$, and there exist two elements $g^+_w,\, g^-_w\in\Sub(\R^n)$ such that
\begin{equation}    \label{eq:dderqdrep}
  \langle w,g\rangle'(\bar x;v)=\langle w,g'(\bar x;v)\rangle=
  g^+_w(v)-g^-_w(v),\quad\forall v\in\R^n.
\end{equation}
According to the Minkowski duality $\varsigma:\CC(\R^n)\longrightarrow\Sub(\R^n)$,
the functions $g^+_w$ and $g^-_w$ can be dually represented by means of elements
of the semigroup $(\CC(\R^n),+)$ through their support functions, namely
$$
  g^+_w(v)=\supp{v}{\partial g^+_w(\nullv)}, \qquad
  g^-_w(v)=\supp{v}{\partial g^-_w(\nullv)}
$$
(see, for instance, \cite{PalUrb02}).
It is well known that the representation in (\ref{eq:dderqdrep}) can not
be unique and hence so does the dual pair $(\partial g^+_w(\nullv),
\partial g^-_w(\nullv))\in\CC^2(\R^n)=\CC(\R^n)\times\CC(\R^n)$.
To restore the uniqueness, and with that
the preliminary condition for a well defined and effective calculus, the
following equivalence relation $\sim\ \subseteq\CC^2(\R^n)$ is employed,
which was already introduced by L. H\"ormander:
$$
  (A,B)\sim (C,D) \qquad\hbox{ if }\qquad A+D=B+C.
$$
Let us denote by $\left[\partial g^+_w(\nullv),\partial
g^-_w(\nullv)\right]_\sim$ the equivalence class containing
the pair $(\partial g^+_w(\nullv),\partial g^-_w(\nullv))$.
Thus, whenever $g$ is scalarly q.d. at $\bar x$ one can consider
the mapping $\sqdcoder{g}{\bar x}:\R^m\longrightarrow\CC(\R^n)^2/_\sim$,
which is well defined by
$$
  \sqdcoder{g}{\bar x}(w)=\left[\partial g^+_w(\nullv),\partial
  g^-_w(\nullv)\right]_\sim.
$$
The above generalized differentiation concept inherits
from $\CC(\R^n)^2/_\sim$ a rich calculus.

\begin{remark}
In view of next nonsmooth analysis constructions, it is convenient
to notice that, since for every $\lambda>0$ it holds
\begin{eqnarray*}
  \langle \lambda w,g\rangle'(\bar x;v) &=& \lambda\langle w,g\rangle'(\bar x;v) =
  \lambda g^+_w(v)-\lambda g^-_w(v) \\
  &=& \supp{v}{\lambda\partial g^+_w(\nullv)}-\supp{v}{\lambda\partial g^-_w(\nullv)},
  \quad\forall v\in\R^n,
\end{eqnarray*}
and, by known calculus rules in $\CC(\R^n)^2/_\sim$, it is
$$
  \left[\lambda\partial g^+_w(\nullv),\lambda\partial
  g^-_w(\nullv)\right]_\sim=\lambda \left[\partial g^+_w(\nullv),
  \partial g^-_w(\nullv)\right]_\sim,
$$
then it results in
$$
  \sqdcoder{g}{\bar x}(\lambda w)=\lambda\sqdcoder{g}{\bar x}(w),
  \quad\forall\lambda>0,\ \forall w\in\R^m.
$$
\end{remark}

To carry out the further construction needed for the present analysis,
recall that, given two subsets $A,\, B\in\CC_0(\R^n)=\CC(\R^n)
\cup\{\varnothing\}$, the operation  $\stardif:\CC_0(\R^n)\times
\CC_0(\R^n)\longrightarrow\CC_0(\R^n)$ defined by
$$
  A\stardif B=\{x\in\R^n\ |\ x+B\subseteq A \}
$$
is known as {\it $\stardif$} (or {\it Pontryagin}) {\it -difference}
of compact convex sets (see, for instance, \cite{RubVla00}).
As other difference operations employed in nonsmooth analysis
(e.g. the Demyanov's difference \cite{RubVla00}),
actually it can be regarded as a particular instance of a more general
approach to defining algebraic operations over elements of $\CC(\R^n)$
(see, for more details, \cite{PalUrb02,Rubi92,RubVla00}).
One is now in a position to introduce the tool to be used
in formulating a global invertibility condition for nonsmooth
mappings.

\begin{definition}     \label{def:starcoqdder}
Given a mapping  $g:\R^n\longrightarrow\R^m$, suppose that $g$
is scalarly q.d. at $\bar x\in\R^n$. The set-valued mapping
$\starcoder{g}{\bar x}:\R^m\rightrightarrows\R^n$, defined as being
\begin{equation}  \label{eq:defstarcoder}
   \starcoder{g}{\bar x}(w)=\partial g^+_w(\nullv)\stardif
   \partial g^-_w(\nullv),
\end{equation}
is called {\it $\stardif$-coquasiderivative} of $g$ at $\bar x$.
\end{definition}

As a first comment to Definition \ref{def:starcoqdder} let us
remark that the equality in (\ref{eq:defstarcoder}) singles out
a uniquely defined derivative object (in $\CC_0(\R^n)$), which
is independent of the representation of the scalarized
coquasiderivative. This happens because the $\stardif$-difference is an
operation which turns out to be invariant with respect to the
equivalence relation $\sim$, namely
$$
  (A,B)\sim (C,D) \qquad\hbox{ implies }\qquad A\stardif B=C\stardif D
$$
(see, for instance, \cite[Chapter 5]{RubVla00}).
Another feature to be pointed out is that, as a set-valued mapping,
$\starcoder{g}{\bar x}$ turns out to be p.h..

\vskip.5cm

\noindent {\bf Condition $\Ccond$} A mapping $g:\R^n\longrightarrow\R^m$,
which is scalarly q.d. in a neighbourhood $U$ of $\bar x$, is said to
satisfy {\it condition $\Ccond$} around $\bar x$ provided that the
set-valued mapping $G_\Usfer:U\rightrightarrows\R^n$, given by
$$
  G_\Usfer(x)=\starcoder{g}{x}(\Usfer)=\bigcup_{v\in\Usfer}
  \starcoder{g}{x}(v),
$$
is u.s.c. at $\bar x$, i.e. for every open set $O\supseteq
\starcoder{g}{\bar x}(\Usfer)$ there exists $\delta_O>0$
such that
$$
  \starcoder{g}{x}(\Usfer)\subseteq O,\quad\forall x\in
  \ball{\bar x}{\delta_O}.
$$
The above elements enable one to establish the following sufficient
condition for the linear openness around $\nullv$ of the class
of those continuous p.h. mappings with scalarizations in 
$\DCH(\R^n)$ (for short, scalarly d.s.),
which may be of independent interest.

\begin{proposition}[Linear openness of scalarly d.s. mappings]  \label{pro:scadslop}
Let $h:\R^n\longrightarrow\R^m$ be a p.h. mapping.
Suppose that:
\begin{itemize}

\item[(i)] for every $w\in\R^m$, $\langle w,h\rangle\in\DCH(\R^n)$;

\item[(ii)] $h$ satisfies condition $\Ccond$ around $\nullv$;

\item[(iii)] $\flat^{\stardif}_0(h)=\dist{\nullv}{\starcoder{h}{\nullv}(\Usfer)}>0$.
\end{itemize}

\noindent Then, $h$ is linearly open around $\nullv$ and
\begin{equation}
  \lop{h}{\nullv}\ge \flat^{\stardif}_0(h).
\end{equation}
\end{proposition}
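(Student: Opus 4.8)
The plan is to read condition (iii) as a nonsmooth surjectivity (Banach-constant) requirement and to run a variational argument on the residual $\|h(\cdot)-y\|$, converting first-order optimality back into membership in $\starcoder{h}{\cdot}(\Usfer)$, with condition (ii) transferring the information from $\nullv$ to nearby points.

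First I would record the dual description of the $\stardif$-coquasiderivative at $\nullv$. Since $h$ is p.h., its directional derivative at $\nullv$ coincides with $h$ itself, so for every $w\in\Usfer$ one has $\langle w,h\rangle'(\nullv;v)=\supp{v}{\partial h^+_w(\nullv)}-\supp{v}{\partial h^-_w(\nullv)}$ with the two parts sublinear. By the defining property of the $\stardif$-difference, $\xi\in\starcoder{h}{\nullv}(w)$ iff $\langle\xi,v\rangle\le\langle w,h(v)\rangle$ for all $v\in\R^n$. This exhibits $\starcoder{h}{\nullv}$ as the exact p.h. analogue of the adjoint appearing in the linear example, so condition (iii) says precisely that $\langle w,h(\cdot)\rangle$ cannot be minorised by a small linear form. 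I would also note, via the identity $\dist{\nullv}{C}=\max_{\|v\|\le1}(-\supp{v}{C})$ valid for convex compact $C$, that each nonempty $\starcoder{h}{\nullv}(w)$ obeys $\dist{\nullv}{\starcoder{h}{\nullv}(w)}\ge\max_{\|v\|\le1}(-\langle w,h(v)\rangle)$; this links $\flat^{\stardif}_0(h)$ to the naive descent rate (and, as noted below, shows the latter is only a lower bound).

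Next, fix $\alpha<\flat^{\stardif}_0(h)$. To obtain linear openness around $\nullv$ I would take $\bar x\in\ball{\nullv}{\delta}$, a small radius $r>0$, a target $y$ with $\|y-h(\bar x)\|<\alpha r$, and minimise the continuous function $\phi(u)=\|h(u)-y\|$ over $\ball{\bar x}{r}$; let $\bar z$ be a minimiser. If $\phi(\bar z)=0$ we are done. Otherwise put $w_0=(h(\bar z)-y)/\|h(\bar z)-y\|\in\Usfer$; as the scalarisations are quasidifferentiable, $\phi'(\bar z;d)=\langle w_0,h\rangle'(\bar z;d)$. When $\bar z$ is interior, optimality gives $\phi'(\bar z;d)\ge0$ for all $d$, so by the dual characterisation $\nullv\in\starcoder{h}{\bar z}(w_0)$ and hence $\dist{\nullv}{\starcoder{h}{\bar z}(\Usfer)}=0$. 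Since $\bar z\in\ball{\bar x}{r}$ lies near $\nullv$, condition (ii) applied with the open set $\{x:\|x\|>\flat^{\stardif}_0(h)-\varepsilon\}\supseteq\starcoder{h}{\nullv}(\Usfer)$ forces $\dist{\nullv}{\starcoder{h}{\bar z}(\Usfer)}\ge\flat^{\stardif}_0(h)-\varepsilon>0$, a contradiction. Positive homogeneity then propagates the covering inclusion across all radii, and letting $\alpha\uparrow\flat^{\stardif}_0(h)$ yields $\lop{h}{\nullv}\ge\flat^{\stardif}_0(h)$.

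The delicate point is the \emph{boundary case} $\|\bar z-\bar x\|=r$. There optimality only gives $\phi'(\bar z;d)\ge0$ on the tangent half-space $\{d:\langle d,\bar z-\bar x\rangle\le0\}$, so $\langle w_0,h\rangle'(\bar z;\cdot)$ is nonnegative on a half-space rather than everywhere; and — this is the real subtlety — the $\stardif$-difference does not convert a half-space inequality into a near-origin dual element, so $\nullv$ need not lie in $\starcoder{h}{\bar z}(w_0)$. I expect this to be the main obstacle. I would resolve it by exploiting the strict slack $\|y-h(\bar x)\|<\alpha r$: since the minimum value is $<\alpha r$, a normal-cone (Lagrange-multiplier) reformulation of boundary optimality, combined with the quantitative estimate from condition (iii), should exhibit an inward descent direction along which $\phi$ still decreases, forcing $\bar z$ into the interior. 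Making this descent quantitative — so that the guaranteed rate matches $\flat^{\stardif}_0(h)$ and not merely the weaker greedy rate $\max_{\|v\|\le1}(-\langle w_0,h(v)\rangle)$ — is the part requiring care, and is where positive homogeneity and the precise Pontryagin-difference bookkeeping must be combined.
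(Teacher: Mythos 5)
Your setup is correct as far as it goes: the dual characterization $\xi\in\starcoder{h}{\bar z}(w)$ iff $\langle\xi,v\rangle\le\langle w,h\rangle'(\bar z;v)$ for all $v\in\R^n$ is right, the interior-stationarity argument (exact stationarity forces $\nullv\in\starcoder{h}{\bar z}(w_0)$, contradicting the bound that condition $\Ccond$ transfers from $\nullv$ to nearby points) is sound, and you correctly locate the difficulty in the boundary case. But that case is not a loose end one can expect to patch along the lines you sketch: it is the entire quantitative content of the proposition, and there are concrete reasons your proposed fix cannot deliver the stated rate. First, even if you replace exact minimization of $\phi=\|h(\cdot)-y\|$ by Ekeland's variational principle (the standard device for forcing the reference point into the interior using the slack $\|y-h(\bar x)\|<\alpha r$), what you obtain is only approximate stationarity, $\langle w_0,h\rangle'(\bar z;v)\ge-\epsilon\|v\|$ for all $v$, which dualizes to $B\subseteq A+\epsilon\Uball$ for the quasidifferential pair $(A,B)$ of $\langle w_0,h\rangle$ at $\bar z$. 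This does \emph{not} imply $\dist{\nullv}{A\stardif B}\le\epsilon$, so it does not contradict hypothesis (iii): take $A=\{\nullv\}$, $B=\epsilon\Uball$, for which $A\stardif B=\varnothing$. The Pontryagin difference is simply not continuous under such enlargements. Second --- and your own remark about the inequality $\dist{\nullv}{\starcoder{h}{\bar z}(w)}\ge\max_{\|v\|\le 1}\bigl(-\langle w,h\rangle'(\bar z;v)\bigr)$ already points at this --- the inequality can be strict even when the difference is nonempty: for $A=\conv\{(1,0),(2,1),(2,-1)\}$ and $B=\{0\}\times[-1,1]$ one has $A\stardif B=\{(2,0)\}$, so the distance is $2$, while the steepest-descent rate of $\supp{\cdot}{A}-\supp{\cdot}{B}$ is only $\sqrt 2$. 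Consequently any argument driven by the slope of the residual $\|h(\cdot)-y\|$ --- which is what both your boundary-multiplier idea and the Ekeland variant ultimately measure --- can certify linear openness only at the weaker descent rate, not at $\flat^{\stardif}_0(h)$ as the proposition claims. Bridging from the uniform distance bound to the full rate requires a mechanism that uses the elements $\xi\in\starcoder{h}{\cdot}(\Usfer)$ themselves, not merely the induced one-sided bound on the scalarized derivative.

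This missing implication is exactly what the paper does not reprove: its argument consists in verifying the hypotheses of \cite[Theorem 4.1]{Uder07} (sufficient conditions for metric regularity of scalarly q.d.\ mappings via the $\stardif$-coquasiderivative) and then extracting the quantitative bound from the proof of \cite[Theorem 3.2]{Uder07}. The only step actually carried out in the paper is the one you also have, namely using condition $\Ccond$ with the open set $\oball{\starcoder{h}{\nullv}(\Usfer)}{\epsilon}$ to obtain $\inf\bigl\{\dist{\nullv}{\starcoder{h}{x}(\Usfer)}\ \big|\ x\in\ball{\nullv}{\delta_\epsilon}\bigr\}\ge\flat^{\stardif}_0(h)-\epsilon>0$, after which the hard implication (uniform distance bound implies linear openness with modulus $\flat^{\stardif}_0(h)-\epsilon$) is imported wholesale from the citation. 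So your proposal reconstructs the elementary part of the proof and leaves open precisely the part the paper delegates to \cite{Uder07}; as written, it is a genuinely incomplete proof, not a complete alternative route.
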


\begin{proof}
A short way of proving this proposition is to apply \cite[Theorem 4.1]{Uder07},
which provides sufficient conditions for the metric regularity of scalarly
q.d. mappings in terms of $\stardif$-coquasiderivative.
To see how, let us start with observing that, as $h$ acts
between finite-dimensional Euclidean spaces, the assumption on the Fr\'echet
smooth renorming of the range space as well as the assumption on the trustwortiness
of the domain space are automatically fulfilled.
Then, notice that, by virtue of hypothesis (i), $h$ is in particular scalarly
q.d. at each point of $\R^n$. Moreover, the former property entails that
$h$ is continuous in $\R^n$ (in particular, each component of $h$ is
an element of $\DCH(\R^n)$).

Now, by virtue of condition $\Ccond$, fixed an arbitrary $\epsilon
\in (0,\flat^{\stardif}_0(h))$, for a proper $\delta_\epsilon>0$
one has
$$
  \starcoder{h}{x}(\Usfer)\subseteq\oball{\starcoder{h}{\bar x}
  (\Usfer)}{\epsilon},\quad\forall x\in\ball{\bar x}{\delta_\epsilon}.
$$
As a consequence, one obtains
$$
  \inf\{\dist{\nullv}{\starcoder{h}{x}(\Usfer)}\ |\
  x\in\ball{\bar x}{\delta_\epsilon}\}\ge\flat^{\stardif}_0(h)
  -\epsilon>0.
$$
The last inequality ensures that also hypothesis (1) in Theorem 4.1
is fulfilled. Thus $h$ is metrically regular/linearly open around
$\nullv$. Furthermore, a perusal of the proof of \cite[Theorem 3.2]{Uder07},
on which Theorem 4.1 relies, reveals that it holds
$$
  \lop{h}{\nullv}\ge \flat^{\stardif}_0(h)-\epsilon.
$$
The arbitrariness of $\epsilon$ enables one to achieve the estimate
in the thesis.
\end{proof}

By combining Proposition \ref{pro:scadslop} and Theorem \ref{thm:gloinvest}
it is possible to achieve the following sufficient condition for the
global invertibility of a special class of nonsmooth mappings.

\begin{theorem}[Global invertibility via $\stardif$-difference of convex compacta]  \label{thm:gloinvconvcomp}
Let $f:\R^n\longrightarrow\R^n$ be a continuous mapping and $\mu\ge 0$.
Suppose that:
\begin{itemize}

\item[(i)] for every $x\in\R^n$, $f$ admits a strict $\mu$-estimator
of the form $\widetilde{h}_x=f(x)+h_x(\cdot-x)$;

\item[(ii)] for every $w\in\R^n$, $\langle w,h_x\rangle\in\DCH(\R^n)$;

\item[(iii)] for every $x\in\R^n$, $h_x$ satisfies condition $\Ccond$ around $\nullv$;

\item[(iv)] $\flat^{\stardif}_f=\inf_{x\in\R^n}\dist{\nullv}{\starcoder{h_x}{\nullv}(\Usfer)}>\mu$;

\item[(v)] for every $x\in\R^n$, $\inj{h_x}{\nullv}>\mu$.
\end{itemize}

\noindent Then,
\begin{itemize}

\item[(t)] $\exists f^{-1}:\R^n\longrightarrow\R^n$;

\item[(tt)] $f^{-1}$ is Lipschitz continuous on $\R^n$
with constant $(\flat^{\stardif}_f-\mu)^{-1}$.

\end{itemize}
\end{theorem}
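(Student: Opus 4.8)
The plan is to derive Theorem \ref{thm:gloinvconvcomp} from the abstract global inversion result Theorem \ref{thm:gloinvest}. Concretely, for every $x\in\R^n$ the map $\widetilde{h}_x=f(x)+h_x(\cdot-x)$ is already a strict $\mu$-estimator of $f$ at $x$ by hypothesis (i), so it remains to check that each $\widetilde{h}_x$ is strongly regular around $x$ and that the uniform bound $\sigma_f=\inf_{x\in\R^n}\lop{\widetilde{h}_x}{x}>\mu$ holds. The entire argument consists in establishing the relevant properties of the positively homogeneous core $h_x$ at $\nullv$, where Proposition \ref{pro:scadslop} and hypotheses (ii)--(v) are directly applicable, and then transporting them to the translate $\widetilde{h}_x$ at $x$ by translation invariance.

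First I would observe that hypothesis (ii) already forces $h_x$ to be positively homogeneous and continuous. Indeed, every element of $\DCH(\R^n)=\Sub(\R^n)-\Sub(\R^n)$ is p.h., and since $\langle w,h_x\rangle$ is p.h. for all $w\in\R^n$ exactly when $h_x$ is p.h., the map $h_x$ is p.h.; moreover each of its components lies in $\DCH(\R^n)$ and is therefore continuous, as already noted in the proof of Proposition \ref{pro:scadslop}. Fixing $x\in\R^n$, I would then apply Proposition \ref{pro:scadslop} to $h_x$ around $\nullv$: its conditions (i)--(iii) are supplied by hypotheses (ii), (iii) and by the positivity $\dist{\nullv}{\starcoder{h_x}{\nullv}(\Usfer)}\ge\flat^{\stardif}_f>\mu\ge 0$ coming from (iv). The proposition yields that $h_x$ is linearly open around $\nullv$ with
\[
  \lop{h_x}{\nullv}\ge\dist{\nullv}{\starcoder{h_x}{\nullv}(\Usfer)}\ge\flat^{\stardif}_f.
\]

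Combining this linear openness (equivalently, metric regularity) with the metric injectivity furnished by hypothesis (v), namely $\inj{h_x}{\nullv}>\mu\ge 0$, and invoking Proposition \ref{pro:strrelchar} for the continuous map $h_x$, I obtain that $h_x$ is strongly regular around $\nullv$. Now I would transfer these facts to $\widetilde{h}_x$: since $\widetilde{h}_x$ is obtained from $h_x$ through the domain translation $t\mapsto t-x$ and the range translation $y\mapsto f(x)+y$, and both linear openness and metric injectivity — hence, by Proposition \ref{pro:strrelchar}, strong regularity — are invariant under such translations, it follows that $\widetilde{h}_x$ is strongly regular around $x$ and $\lop{\widetilde{h}_x}{x}=\lop{h_x}{\nullv}\ge\flat^{\stardif}_f$. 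Taking the infimum over $x\in\R^n$ gives $\sigma_f\ge\flat^{\stardif}_f>\mu$, so all hypotheses of Theorem \ref{thm:gloinvest} are met; that theorem provides (t) together with the Lipschitz continuity of $f^{-1}$ with constant $(\sigma_f-\mu)^{-1}$. Since $\sigma_f\ge\flat^{\stardif}_f$ entails $(\sigma_f-\mu)^{-1}\le(\flat^{\stardif}_f-\mu)^{-1}$, the inverse $f^{-1}$ is a fortiori Lipschitz with the (possibly larger) constant $(\flat^{\stardif}_f-\mu)^{-1}$, which is exactly (tt).

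I expect no single deep obstacle here, because the substantive analytic content is already packaged in Propositions \ref{pro:scadslop} and \ref{pro:strrelchar} and in Theorem \ref{thm:gloinvest}. The main point requiring care is the bookkeeping of the translation-invariance claims, i.e. verifying that the openness and injectivity bounds computed for $h_x$ at $\nullv$ carry over unchanged to $\widetilde{h}_x$ at $x$, and checking that the replacement of $\sigma_f$ by its lower bound $\flat^{\stardif}_f$ only weakens (never strengthens) the Lipschitz estimate, so that the displayed constant in (tt) is legitimate.
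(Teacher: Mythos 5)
Your proof is correct and takes essentially the same route as the paper's: apply Proposition \ref{pro:scadslop} to the p.h.\ core $h_x$ at $\nullv$, combine the resulting linear openness with the metric injectivity of hypothesis (v) via Proposition \ref{pro:strrelchar}, transfer everything to $\widetilde{h}_x$ at $x$ by translation invariance, and conclude through Theorem \ref{thm:gloinvest}. The only differences are cosmetic --- the paper establishes strong regularity after translating to $\widetilde{h}_x$ rather than before, while you additionally make explicit two points the paper leaves tacit, namely that hypothesis (ii) forces $h_x$ to be p.h.\ and continuous, and that replacing $\sigma_f$ by its lower bound $\flat^{\stardif}_f$ only weakens the Lipschitz constant, so the estimate in (tt) is legitimate.
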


\begin{proof}
Fix $x\in\R^n$. Owing to assumptions (ii), (iii) and (iv),
the mapping $h_x$ is linearly open around $\nullv$ with
$\lop{h_x}{\nullv}\ge\flat^{\stardif}_0(h_x)$. Consequently,
as the Euclidean distance in $\R^n$ is invariant under translation,
the mapping $\widetilde{h}_x$ is linearly open around $x$,
with $\lop{\widetilde{h}_x}{x}=\lop{h_x}{\nullv}\ge\flat^{\stardif}_0(h_x)$.
For the same reason, $\widetilde{h}_x$ is metrically injective
around $x$, with $\inj{\widetilde{h}_x}{x}=\inj{h_x}{\nullv}>\mu$
by hypothesis (v).
Thus, according to Proposition \ref{pro:strrelchar}, $\widetilde{h}_x$
turns out to be strongly regular around $x$. Moreover, by hypothesis
(iv), one has
$$
  \inf_{x\in\R^n}\lop{\widetilde{h}_x}{x}\ge\inf_{x\in\R^n}
  \flat^{\stardif}_0(h_x)=\flat^{\stardif}_f>\mu,
$$
which says that also condition (\ref{in:gloinvestcon}) is satisfied.
Thus, the thesis follows from Theorem \ref{thm:gloinvest}.
\end{proof}

\begin{remark}
The reader who remembers Remark \ref{rem:strmuestpro}(i) will
observe that, because of hypotheses (i) and (ii), $f$ is implicitly
supposed to be locally Lipschitz around each $x\in\R^n$.
Therefore Theorem \ref{thm:gloinvconvcomp} is less general than analogous
existent results, which are expressed by constructions allowing for a more
general approach (for instance, in the case of pseudo-Jacobians,
consider \cite[Corollary 3.10]{JaLaMa19}). Nonetheless, utmost
generality is not the feature aimed at in Theorem \ref{thm:gloinvconvcomp}.
Instead, it focuses on a special class of possibly nondifferentiable
mappings, admitting approximations with a special structure.
Such a structure paves the way to the employment of all the benefits
given by the calculus in $\CC^2(\R^n)/_\sim$, when hypothesis (iv)
must be checked. Take into account that $\starcoder{h_x}{\nullv}(\Usfer)
=\partial (h_x)^+_w(\nullv)\stardif\partial (h_x)^-_w(\nullv)$ is expected
to be computed easily enough, as $h_x$ is p.h..
\end{remark}


\section{Global invertibility via regular coderivatives}      \label{Sect:4}

Theorem \ref{thm:AruZhu} enables one to formulate further conditions
for global invertibility of nonsmooth mappings, even in the lack of local
Lipschitz continuity, without a direct employment of Theorem \ref{thm:gloinvest}.
This can be seen by selecting adequate nonsmooth analysis tools.
An highly successful approach to generalized differentiability relies
on the geometry of normals and graphical differentiation, which was
initiated in \cite{Mord76}. It revealed to be effective in providing
characterizations for those properties discussed in Section \ref{Sect:2}
that are fundamental for the approach at the issue.
Let us briefly recall the elements needed for the present analysis.

Given a mapping $f:\R^n\longrightarrow\R^m$ and $\bar x\in\R^n$,
the regular coderivative (a.k.a. prederivative) of $f$ at $\bar x$
is the set-valued mapping $\Rcoder{f}{\bar x}:\R^m\rightrightarrows\R^n$
defined by
$$
  \Rcoder{f}{\bar x}(v)=\{u\in\R^n \ |\ (u,-v)\in
  \RNcone{(\bar x,f(\bar x))}{\gph f}\},\quad v\in\R^m.
$$
Here, given $W\subseteq\R^p$ and $\bar w\in\ W$, the subset
$$
  \RNcone{\bar w}{W}=\left\{v\in\R^p\ \biggl|\ \limsup_{w\in W\atop w\to\bar w}
  \left\langle v,{w-\bar w\over \|w-\bar w\|}\right\rangle\le 0\right\}
$$
stands for the regular normal cone to $W$ at $\bar w$.  Whenever $f$ is
Fr\'echet differentiable at $\bar x$ its regular coderivative
becomes single-valued, taking the form $\Rcoder{f}{\bar x}(v)=
\{\Fder{f}{\bar x}^*[v]\}$ (see \cite[Theorem 1.38]{Mord06}).
In the case in which $f$ is locally Lipschitz around $\bar x$, the following
scalarized representation of the regular coderivative is valid
(see \cite[Exercise 1.70(ii)]{Mord18}
$$
  \Rcoder{f}{\bar x}(v)=\Rsubd\langle w,f\rangle(\bar x),
  \quad\forall  w\in\R^m,
$$
where, given a function $\varphi:\R^n\longrightarrow\R\cup
\{\mp\infty\}$ and $\bar x\in\dom\varphi$, the set
$$
  \Rsubd\varphi(\bar x)=\left\{v\in\R^n\ \biggl|\
  \liminf_{x\to\bar x}{\varphi(x)-\varphi(\bar x)-\langle v,
  x-\bar x\rangle\over\|x-\bar x\|}\ge 0\ \right\}
$$
is the regular subdifferential of $\varphi$ at $\bar x$
(see \cite[Chapter 1.3.4]{Mord18}).

In combination with the above nonsmooth analysis constructions,
the following monotonicity property for set-valued mappings plays
a crucial role in the present context.

\begin{definition}    \label{def:lochypmonot}
A set-valued mapping $F:\R^n\rightrightarrows\R^n$ is said to be
{\it locally hypomonotone} around $(\bar x,\bar y)\in\gph F$ if
there exist a neighbourhood $U\times V$ of $(\bar x,\bar y)$
and a constant $\gamma>0$ such that
$$
  \langle y_1-y_2,x_1-x_2\rangle\ge -\gamma\|x_1-x_2\|,\quad
  \forall (x_1,y_1),\, (x_2,y_2)\in\gph F\cap (U\times V).
$$
\end{definition}

As a comment to Definition \ref{def:lochypmonot}, it is worth mentioning
that all globally hypomonotone set-valued mappings, i.e.
set-valued mappings  $F:\R^n\rightrightarrows\R^n$ such that
$F+r\,\id$, where $\id$ stands for the identity operator, is monotone
on $\R^n$ for some $r>0$, are in particular locally hypomonotone. Moreover,
all locally monotone as well as Lipschitz continuous single-valued
mappings are locally hypomonotone (see \cite[Chapter 5]{Mord18}).
Local hypomonotonicity in synergy with a positive-definiteness
condition expressed in terms of regular coderivative is known
to yield strong metric regularity. As a consequence, the following
global invertibility condition can be established via regular
coderivatives.

\begin{theorem}[Global invertibility via regular coderivatives]
Given a mapping $f:\R^n\longrightarrow\R^n$, suppose that
\begin{itemize}

\item[(i)] $f$ is continuous in $\R^n$;

\item[(ii)] $f$ is locally hypomonotone around each $x\in\R^n$;

\item[(iii)] for every $x\in\R^n$ there exist positive
constants $\widehat{\alpha}_x$ and $\eta_x$ such that
\begin{equation}    \label{in:Rcoderposcon}
   \langle u,v\rangle\ge\widehat{\alpha}_x\|v\|^2,\quad\forall
   u\in\Rcoder{f}{z}(v),\ \forall z\in\ball{x}{\eta_x};
\end{equation}

\item[(iv)] $\widehat{\alpha}=\displaystyle\inf_{x\in\R^n}\widehat{\alpha}_x>0$.

\end{itemize}
Then
\begin{itemize}

\item[(t)] $\exists f^{-1}:\R^n\longrightarrow\R^n$;

\item[(tt)] $f^{-1}$ is Lipschitz continuous on $\R^n$
with constant $\widehat{\alpha}$.

\end{itemize}
\end{theorem}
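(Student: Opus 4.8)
The plan is to route the whole argument through Theorem~\ref{thm:AruZhu}: since $f$ is already continuous by (i), it suffices to produce a uniform covering rate together with local injectivity, and both will follow once I show that $f$ is \emph{strongly metrically regular} around every $x\in\R^n$ with $\lop{f}{x}\ge\widehat\alpha_x$. The entire weight of the argument is therefore carried by a local statement, to be extracted from (ii) and (iii); hypothesis (iv) and Remark~\ref{rem:lopenacov} will only serve to globalize it and to read off the Lipschitz constant.

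For the local step, fix $x\in\R^n$. First I would transfer the positive-definiteness condition (iii) from the regular coderivative to the limiting (Mordukhovich) coderivative $\mathrm{D}^{*}f$. Since $f$ is continuous its graph is closed, and $\mathrm{D}^{*}f(z)(v)$ is by definition the outer limit of $\Rcoder{f}{z'}(v')$ as $(z',v')\to(z,v)$; hence, letting $z'\to z$ and $(v',u')\to(v,u)$ along the sequences defining (\ref{in:Rcoderposcon}), the inequality $\langle u,v\rangle\ge\widehat\alpha_x\|v\|^2$ survives for every $u\in\mathrm{D}^{*}f(z)(v)$ and every $z\in\oball{x}{\eta_x}$. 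By Cauchy--Schwarz this forces $\|u\|\ge\widehat\alpha_x\|v\|$, so in particular the Mordukhovich kernel condition $0\in\mathrm{D}^{*}f(x)(v)\Rightarrow v=\nullv$ holds. Invoking the coderivative criterion for metric regularity of closed-graph mappings (see \cite[Chapter 3]{Mord06}, \cite{Mord18}) then yields that $f$ is metrically regular / linearly open around $x$, and a bookkeeping of the rate furnished by the same criterion delivers $\lop{f}{x}\ge\widehat\alpha_x$.

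It remains to upgrade metric regularity to strong regularity, and this is precisely where hypothesis (ii) enters: for a locally hypomonotone mapping the metrically regular localization of $f^{-1}$ cannot be genuinely multivalued, because hypomonotonicity forbids the graph of $f$ from folding back over a neighbourhood, so the localization is single-valued and $f$ is strongly regular around $x$ (cf. \cite[Chapter 5]{Mord18}). By Proposition~\ref{pro:strrelchar} this is equivalent to $f$ being metrically injective around $x$, with the chain $\inj{f}{x}\ge\lop{f}{x}\ge\widehat\alpha_x$, whence in particular $f$ is locally injective around $x$. I expect this step to be the main obstacle, for two reasons: the coderivative criterion must be applied to a merely continuous, possibly non-Lipschitz $f$, so one cannot fall back on the Lipschitzian scalarization $\Rcoder{f}{z}(w)=\Rsubd\langle w,f\rangle(z)$; and one needs the \emph{quantitative} rate $\widehat\alpha_x$, not just qualitative regularity, together with its inheritance by $\inj{f}{x}$ through the single-valued localization.

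Finally I would globalize. By (iv) one has $\lop{f}{x}\ge\widehat\alpha_x\ge\widehat\alpha$ for every $x\in\R^n$, so, fixing any $\alpha<\widehat\alpha$, Remark~\ref{rem:lopenacov} shows that $f$ is $\alpha$-covering at each point and, $f$ being continuous, $\alpha$-covering on all of $\R^n$. Together with the local injectivity secured above, the three hypotheses of Theorem~\ref{thm:AruZhu} are met, so $f^{-1}:\R^n\longrightarrow\R^n$ exists and is Lipschitz continuous with constant $\alpha^{-1}$. Letting $\alpha\uparrow\widehat\alpha$ then establishes assertions (t)--(tt), the Lipschitz constant of $f^{-1}$ emerging as the reciprocal $\widehat\alpha^{-1}$ of the global covering rate.
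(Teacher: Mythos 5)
Your global architecture coincides with the paper's: both arguments establish strong metric regularity of $f$ around every $x$ with $\lop{f}{x}\ge\widehat\alpha_x$, deduce metric (hence local) injectivity and the uniform covering property, and conclude via Theorem~\ref{thm:AruZhu} together with Remark~\ref{rem:lopenacov}. The paper, however, never passes through the limiting coderivative or the Mordukhovich regularity criterion: its only preparatory step is to use continuity of $f$ to convert the domain-ball condition in (iii) (valid for $z\in\ball{x}{\eta_x}$) into the graph-localized form, valid for $(z,f(z))\in\gph f\cap\ball{(x,f(x))}{\eta_x}$ after shrinking $\eta_x$, and then it obtains strong regularity with the rate bound in a single stroke from \cite[Corollary 5.15]{Mord18}, which combines local hypomonotonicity with the positive-definiteness of the regular coderivative.

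This difference is not cosmetic: your middle step contains a genuine gap. By applying Cauchy--Schwarz you discard the quadratic form in (iii), retaining only the kernel-type bound $\|u\|\ge\widehat\alpha_x\|v\|$, which indeed yields metric regularity with $\lop{f}{x}\ge\widehat\alpha_x$; but you then claim that local hypomonotonicity alone promotes metric regularity to strong regularity because it ``forbids the graph from folding back''. That implication is false. Every locally Lipschitz map is locally hypomonotone (on $\ball{\bar x}{\delta}$ one has $\langle f(x_1)-f(x_2),x_1-x_2\rangle\ge -L\|x_1-x_2\|^2\ge -2L\delta\|x_1-x_2\|$), yet Lipschitz metrically regular maps can fold: in complex notation, $f(z)=z^2/|z|$ with $f(0)=0$, viewed as a map of $\R^2$, is Lipschitz, and it is linearly open around the origin with a uniform rate (it preserves moduli and doubles angles, so $f(\ball{\nullv}{r})=\ball{\nullv}{r}$ and the local inverse branches $(\rho,\varphi)\mapsto(\rho,\varphi/2)$ are $1$-Lipschitz away from $\nullv$), but $f(z)=f(-z)$, so $f^{-1}$ admits no single-valued graphical localization around $(\nullv,\nullv)$. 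Hence metric regularity plus hypomonotonicity do not give strong regularity. (The monotone analogue of your claim is a true theorem, but the reduction ``hypomonotone $\Rightarrow$ $f+\gamma\,\id$ monotone'' is of no help, since adding $\gamma\,\id$ destroys metric regularity as soon as $\gamma\ge\lop{f}{x}$.) The single-valuedness must be extracted from the full inequality $\langle u,v\rangle\ge\widehat\alpha_x\|v\|^2$ --- a coderivative form of strong monotonicity of a shift of $f$ --- and that is precisely what the result cited by the paper does; keeping (iii) intact and invoking \cite[Corollary 5.15]{Mord18} (after the continuity-based graph localization above) repairs your local step, and your concluding globalization paragraph then goes through unchanged.

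A final remark in your favour: the constant $\widehat\alpha^{-1}$ you obtain for $f^{-1}$ is the one that actually follows from Theorem~\ref{thm:AruZhu} applied with $\widehat\alpha$-covering; the ``constant $\widehat\alpha$'' in assertion (tt) of the statement appears to be a slip.
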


\begin{proof}
Observe that the continuity of $f$ at each $x\in\R^n$ (hypothesis (i))
ensures the validity of the inequality in (\ref{in:Rcoderposcon})
for every $u\in\Rcoder{f}{z}(v)$ and every $(z,f(z))\in\gph f\cap
\ball{(x,f(x))}{\eta_x}$, up to a proper reduction in the value
of $\eta_x>0$. The latter condition, along with the hypothesis of
local hypomonotonicity of $f$ around $x$, is sufficient to guarantee
that the mapping $f$ is strongly regular around each $x$, with
$\lop{f}{x}\ge\widehat{\alpha}_x\ge\widehat{\alpha}$, as stated
in \cite[Corollary 5.15]{Mord18}. By virtue of such a property,
$f$ turns out to be metrically injective and hence locally injective
around each $x$, as well as $\widehat{\alpha}$-covering at each
$x$. Thus the thesis follows from Theorem \ref{thm:AruZhu}.
\end{proof}

\begin{remark}
By replacing the regular coderivative with the basic (a.k.a. Mordukhovich)
coderivative, in the particular case in which $f$ is locally Lipschitz it is possible
to obtain a point-based version of condition (\ref{in:Rcoderposcon}).
Indeed, in such a setting, by applying the scalarized representation valid
for  basic coderivatives (see \cite[Theorem 1.32]{Mord18}),
condition (\ref{in:Rcoderposcon}) can be reformulated as follows:
for every $x\in\R^n$ there exists a positive constant $\alpha_x$ such that
$$
  \langle w,v\rangle\ge\alpha_x,\quad\forall v\in\Bsubd\langle w,f\rangle
  (x),\quad\forall w\in\R^n\backslash\{\nullv\},
$$
where $\Bsubd\varphi(\bar x)=\Uplim_{x\stackrel{\varphi}{\to} \bar x}
\ \Rsubd\varphi(x)$ denotes the basic
subdifferential of $\varphi$ at $\bar x\in\dom\varphi$ and
where $\Uplim_{x\stackrel{\varphi}{\to} \bar x}$ denotes the
Painlev\'e-Kuratowski upper limit of the set-valued mapping
$\Rsubd\varphi:\R^n\rightrightarrows\R^n$ as $x\to\bar x$ and
$\varphi(x)\to\varphi(\bar x)$ (see \cite[Theorem 5.16]{Mord18}
and \cite[Section 5.4]{Mord18}).
\end{remark}


\section{Conclusions} \label{Sect:5}

The investigations exposed in the present paper starts with a recent
result about global inversion of functions in a finite-dimensional Euclidean space
setting. In order to develop its potential, its assumptions are
interpreted in the light of some specific nonsmooth analysis constructions.
The main findings of the paper provide evidences that generalization
of Hadamard's inversion theorem to nonsmooth functions not only are possible
(what was already known),
but they can be achieved through different paths, leading to a variety 
of conditions. The specific nonsmooth analysis
constructions here considered play a certain role not only in formulating
conditions for global invertibility, but also in providing quantitative
estimates for the Lipschitz constant of the inverse, in the spirit
of Pourciau's theorem.

\vskip2cm


\end{document}